\newtheorem{theorem}{Theorem}[section]
\newtheorem{lemma}[theorem]{Lemma}
\newtheorem{proposition}[theorem]{Proposition}
\theoremstyle{remark}
\theoremstyle{definition}
\newtheorem{corollary}[theorem]{Corollary}
\newtheorem{problem}{Problem}
\newtheorem*{case}{Case}
\DeclareMathOperator{\Aut}{Aut}
\DeclareMathOperator{\Syl}{Syl}
\DeclareMathOperator{\GF}{GF}
\DeclareMathOperator{\GL}{GL}
\DeclareMathOperator{\ch}{~char~}
\DeclareMathOperator{\rank}{rank}
\DeclareMathOperator{\rad}{rad}
\title{A characterization of nilpotent bicyclic groups}
\author{Kan Hu}
\address{K. Hu
\newline\indent
Department of Mathematics, Zhejiang Ocean University, Zhoushan, Zhejiang 316022, P.R. China}
\email{hukan@zjou.edu.cn}
\thanks{This work was supported by National Natural
Science Foundation of China (12471332).
}
\keywords{bicyclic group, complete regular dessin, nilpotent number}
\subjclass[2020]{20D40, 05E18, 11N37}
\begin{document}
	\maketitle
\begin{abstract}
A group is called $(m,n)$-bicyclic if it can be expressed as a product of two cyclic subgroups of
orders $m$ and $n$, respectively. The classification and characterization of finite bicyclic groups have long
been important problems in group theory, with applications extending to symmetric embeddings of
the complete bipartite graphs.
A classical result by Douglas establishes that every bicyclic group is supersolvable.
More recently, Fan and Li (2018) proved that  every finite $(m,n)$-bicyclic group is abelian
if and only if $\gcd(m,\phi(n))=\gcd(n,\phi(m))=1$, where $\phi$ is Euler's totient function.
In this paper we generalize this result further and show that every $(m,n)$-bicyclic group is nilpotent
if and only if $\gcd(n,\phi(\mathrm{rad}(m)))=\gcd(m,\phi(\mathrm{rad}(n)))=1$, where
 $\mathrm{rad}(m)$ denotes the radical of $m$ (the product of its distinct prime divisors).
\end{abstract}

\section{Introduction}
Throughout the paper, all groups  considered are finite unless stated otherwise.
A group $G$ is \textit{factorisible} if $G=AB$ for some subgroups $A$ and $B$ of $G$.
In particular, the factorization $G=AB$ is \textit{exact} if $A\cap B=1$. The classical
 factorization problem in group theory, first posed by Ore in 1937, is to 
 describe and classify the groups $G$ that admit a factorization $G=AB$ 
 for two given groups $A$ and $B$~\cite{Ore1937}.
 
This problem has been extensively studied at the level of structural description. 
For example, using elementary techniques, It\^{o} showed if both $A$ and $B$ are abelian, 
then $G$ is metabelian~\cite{Ito1955}. Douglas established that if both $A$ and $B$ are cyclic,
then $G$ is supersolvable~\cite{Douglas1961}.
One of the most well-known results in this direction is due to Kegel and Wielandt, 
who proved that if both $A$ and $B$ are nilpotent, then $G$ is solvable~\cite{Kegel1961,Wielandt1958}.

 In contrast, at the classification level, the factorization problem becomes significantly more
 challenging. A particular longstanding open  problem is the  classification of finite bicyclic groups, that is, 
   finite  groups which can be expressed as a product of two cyclic subgroups.
 As noted earlier, every bicyclic group is supersolvable, and hence possesses a Sylow tower. 
 It is therefore  natural to begin the classification by considering bicyclic groups that are finite $p$-groups.  
 Huppert proved that a bicyclic $p$-group is metacyclic when $p$ is an odd prime~\cite{Huppert1953}. 
 However, it is well known that  non-metacyclic bicyclic $2$-groups exist. A  classification of 
 metayclic $p$-groups was obtained
   by Xu for $p>2$, and by Xu and Zhang for $p=2$. The classification of bicyclic $2$-groups was eventually
   completed by Janko~\cite{Janko2008}. 
   
   In parallel, bicyclic groups have also been studied by combinatorialists in connection with symmetric embeddings
   of graphs into oriented closed surfaces.  A $2$-cell embedding of a graph $\Gamma$ into an oriented closed surface $\mathbb{S}$
   is called a \textit{map} on $\mathbb{S}$, and denoted by $M$. In particular, if the underlying graph $\Gamma$ is bipartite, 
   the associated map is often referred to as a \textit{dessin} and denoted by $D$, following Grothendieck's 
   formulation of the theory of dessins d'enfants. An \textit{automorphism}
   of a map is an automorphism of the underlying graph which extends to an orientation-preserving 
   self-homeomorphism of the supporting surface. It is well known that the automorphism group $\Aut^+(M)$ of a map $M$
   acts semi-regularly on the set of arcs of the underlying graphs. If this action is also transitive, and so regular,
   then the map $M$ is called \textit{regular}. Similarly,  for a dessin $D$, the subgroup $\Aut^+_0(M)$,
   consisting of color-preserving automorphisms, acts semi-regularly on the set of edges of $D$. If this action is regular, then 
    $D$ is called a \textit{regular dessin}.

    A dessin is referred to as a \textit{complete dessin} if its underling graph
   is a complete bipartite graph; or more specifically, it is called \textit{$(m,n)$-complete} if the underlying graph is 
   the complete bipartite graph $K_{m,n}$. We note that a complete regular dessin $D$ with underlying 
   graph $K_{m,n}$ gives rise to a regular map if and only if $m=n$ and $D$ admits
   an orientation-preserving automorphism that swaps the vertices of different colors.
   
   An important problem in topological graph theory is the classification of complete regular dessins. 
   This problem is closely related to bicyclic  groups, as shown in~\cite{JNS2007}: if $D$ is a 
   complete regular dessin with underlying graph $K_{m,n}$, then
   $G:=\Aut_0^+(D)$ admits an exact factorization $G=\langle a\rangle \langle b\rangle $ of two cyclic subgroups 
   $\langle a\rangle $ and $\langle b\rangle $ of orders $m$ and $n$, respectively. Moreover, the dessin $D$ is 
   determined (up to isomorphism) by the triple $(G,a,b)$    in the sense that two complete regular dessins $D$ and $D'$, 
   corresponding to the triples $(G,a,b)$ and $(G',a',b')$, are isomorphic
   if and only if  there is a group isomorphism from $G$ onto $G'$ mapping $a\mapsto a'$ 
   and $b\mapsto b'$. This correspondence makes the classification of bicyclic groups a fundamental step towards
   the classification of complete regular dessins. Using this connection, 
   regular maps with underlying    graphs $K_{n,n}$ have been successfully classified in a series of works~\cite{DJKNS2007,DJKNS2010,Jones2010,JNS2008,JNS2007}. Additionally, 
  partial classifications of complete regular dessins have been obtained in~\cite{CH2024, Fan2022,FHNSW2020,FL2018,FLQ2023,FLQ2018,HNW2019}.
 
Among the various results, one particularly interesting case stands out. It was shown that, up
to isomorphism, there exists a \textit{unique} regular map with underlying graph $K_{n,n}$ if and only if $\gcd(n,\phi(n))=1$~\cite{JNS2007}. More generally, there exists a \textit{unique} regular dessin with underlying graph 
$K_{m,n}$ if and only if the pair $(m,n)$ 
is \textit{singular}, meaning $\gcd(m,\phi(n))=1$ and $\gcd(\phi(m),n)=1$~\cite{FL2018}. Reformulated in
 group-theoretic language, this result is equivalent to the following statement: 
\textit{Every} $(m,n)$-bicyclic group is abelian if and only if $(m,n)$ is a singular pair. 

We now mention a few classical results in group theory that exhibit similar structural properties. 
A natural number $n$ is said to be \textit{cyclic} (respectively, \textit{abelian}, \textit{nilpotent}, \textit{supersolvable}, or \textit{solvable}) 
if every finite group of order $n$ is cyclic 
(respectively, abelian, nilpotent, supersolvable, or solvable). To state these results, we define a function 
$\psi$ in the following way: 
\begin{enumerate}[(a)]
\item $\psi(1)=1$,
\item for a prime $p$ and an integer $e\geq1$, $\psi(p^e)=\prod_{i=1}^e(p^i-1)$,
\item $\psi$ is multiplicative; that is, if $\gcd(m,n)=1$, then $\psi(mn)=\psi(m)\psi(n)$.
\end{enumerate}
Thus, for $n=\prod_{i=1}^rp_i^{e_i}$, we have 
\[
\psi(n)=\prod_{i=1}^r\prod_{j=1}^{e_i}(p^j-1).
\]
With this definition, it turns out that
\begin{enumerate}[(a)]
\item  $n$ is nilpotent if and only if $\gcd(n,\psi(n))=1$,
\item $n$ is abelian if and only if $\gcd(n,\psi(n))=1$ and $n$ is cube-free, 
\item $n$ is cyclic if and only if  $\gcd(n,\psi(n))=1$ 
and $n$ is square-free (or equivalently, $\gcd(n,\phi(n))=1$). 
\end{enumerate}
Related results on supersolvable and solvable numbers
can be found in~\cite{Bray1982, Pazderski1959, PS2000}.

 Since  bicyclic groups are, in general, supersolvable, and since the class of nilpotent groups 
 lies strictly between the classes of abelian and supersolvable group, it is natural
 to consider the following problem: 
\begin{problem}
Determine necessary and sufficient conditions on $m$ and $n$ under which every $(m,n)$-bicyclic group is nilpotent.
\end{problem}   
The following is the main result of this paper, which provides a complete solution to the problem.
 \begin{theorem}\label{main}
Every $(m,n)$-bicyclic group is nilpotent if and only if
\begin{align}\label{Mcond}
\gcd\big(n,\phi(\rad(m))\big)=\gcd\big(m,\phi(\rad(n))\big)=1,
\end{align}
 where
 $\mathrm{rad}(m)$ denotes the radical of $m$, i.e., the product of its distinct prime divisors of $m$.
\end{theorem}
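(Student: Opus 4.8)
\emph{Overview and necessity.}
I will prove the two implications separately. For necessity, arguing contrapositively, suppose \eqref{Mcond} fails; by symmetry assume $\gcd(n,\phi(\rad(m)))>1$. Since $\phi(\rad(m))=\prod_{p\mid m}(p-1)$, there are primes $p\mid m$ and $q\mid n$ with $q\mid p-1$; note $q\neq p$, so $p$ is odd. Write $m=p^{\alpha}u$, $n=q^{\beta}v$ with $p\nmid u$, $q\nmid v$. As $\Aut(C_{p^{\alpha}})$ is cyclic of order $p^{\alpha-1}(p-1)$ and $q\mid p-1$, I can form the non‑nilpotent group $G_{0}=C_{p^{\alpha}}\rtimes C_{q^{\beta}}=\sg{x}\sg{y}$ in which $\sg{x}=C_{p^{\alpha}}$ is normal, $\sg{y}=C_{q^{\beta}}$ acts through an automorphism of order $q$, and $|x|=p^{\alpha}$, $|y|=q^{\beta}$. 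With $s=\operatorname{lcm}(u,v)$, set $G=G_{0}\times C_{s}$, choose $a_{0},b_{0}\in C_{s}$ of orders $u,v$, and put $a=(x,a_{0})$, $b=(y,b_{0})$. Then $|a|=\operatorname{lcm}(p^{\alpha},u)=m$ and $|b|=n$; since $\sg{x}\cap\sg{y}=1$ in $G_{0}$ one gets $\sg{a}\cap\sg{b}=\{(1,c):c\in\sg{a_{0}}\cap\sg{b_{0}}\}$, of order $\gcd(u,v)$, whence $|\sg{a}\sg{b}|=mn/\gcd(u,v)=p^{\alpha}q^{\beta}\operatorname{lcm}(u,v)=|G|$. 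So $G=\sg{a}\sg{b}$ is $(m,n)$-bicyclic and not nilpotent, as required.

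\emph{Sufficiency: structure of a minimal counterexample.}
Assume \eqref{Mcond} and let $G=AB$ be a counterexample of least order, with $A=\sg{a}$, $B=\sg{b}$ of orders $m,n$. Every proper quotient $G/K$ is bicyclic with parameters dividing $(m,n)$; since $d\mid e$ implies $\phi(\rad(d))\mid\phi(\rad(e))$, $G/K$ still satisfies \eqref{Mcond}, so by minimality $G/K$ is nilpotent. Hence $G$ has a unique minimal normal subgroup $N$ (two such would embed $G$ into a product of nilpotent quotients) and $\Phi(G)=1$ (else $G/\Phi(G)$ nilpotent would force $G$ nilpotent). Since $O_{p}(G)\triangleleft G$ we have $\Phi(O_{p}(G))\le\Phi(G)=1$, so each $O_{p}(G)$ is elementary abelian; by Douglas's theorem~\cite{Douglas1961} $G$ is supersolvable, so for the largest prime $p$ dividing $|G|$ the Sylow $p$-subgroup $P=O_{p}(G)$ is normal and nontrivial, giving $N\le P$ with $N$ an elementary abelian $p$-group. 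Finally $C:=C_{G}(N)<G$, since otherwise $N\le Z(G)$ and $G$, a central extension of the nilpotent group $G/N$, would be nilpotent.

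\emph{Sufficiency: the case analysis.}
The Sylow $p$-subgroup of the cyclic group $A$ is cyclic and embeds in the elementary abelian $P$, so the $p$-part of $m$ is at most $p$; likewise for $n$. As $|G|\mid mn$ this gives $|P|\le p^{2}$. If $|N|=p$ (which covers $|P|=p$, and also $|P|=p^{2}$ with $|N|=p$), then $\Aut(N)\cong C_{p-1}$ and, writing $G/C=\langle\bar A,\bar B\rangle$, we have $|\bar A|\mid\gcd(m,p-1)$ and $|\bar B|\mid\gcd(n,p-1)$; moreover if $p$ divides only one of $m,n$ then the other cyclic factor contains all of $P=N$ and hence centralizes $N$. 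As $G/C\neq1$, this forces a prime $r$ with $r\mid\gcd(n,\phi(\rad(m)))$ or $r\mid\gcd(m,\phi(\rad(n)))$, contradicting \eqref{Mcond}. There remains the case $N=P\cong\mathbb{F}_{p}^{2}$; then $p$ divides both $m$ and $n$, and since $G$ is not a $p$-group $p\ge3$, so $p-1$ is even, and \eqref{Mcond} now forces $m,n$, hence $|G|$, to be odd. Then $|G/C|$ is odd and prime to $p$, so the image of $G/C$ in $\mathrm{PGL}_{2}(p)$ is cyclic by the classification of its subgroups; thus $G/C$ is abelian and, acting faithfully and irreducibly on $\mathbb{F}_{p}^{2}$, embeds into $\mathbb{F}_{p^{2}}^{\times}$ acting by scalar multiplication on $N=\mathbb{F}_{p^{2}}$. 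But the subgroup of order $p$ in $A$ is a line of $N$ centralized by $A$, and a scalar fixing a nonzero vector is trivial; so $\bar A=1$, and symmetrically $\bar B=1$, whence $G=AB\le C$ — contradicting $C<G$. This exhausts all cases, so no minimal counterexample exists.

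\emph{Main obstacle.}
The delicate point is the final case $N=P\cong\mathbb{F}_{p}^{2}$: a direct ``fixes a line'' argument is inconclusive over fields of characteristic $\neq2$, because $2$-subgroups of $\mathrm{GL}_{2}(p)$ (e.g.\ $D_{8}$ or $Q_{8}$) can act faithfully and irreducibly while containing elements that pointwise fix a line. What resolves it is the parity phenomenon — forced by \eqref{Mcond} as soon as the top-prime Sylow $P$ has rank $2$, since then $p\ge3$ divides $\gcd(m,n)$ — that $|G|$ is odd, which collapses $G/C$ to a cyclic non-split torus. Proving the bound $|P|\le p^{2}$ and extracting this parity is the technical heart of the argument; everything else is bookkeeping with the $\gcd$ conditions.
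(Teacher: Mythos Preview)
Your argument is correct in both directions; the only quibble is the phrase ``the other cyclic factor contains all of $P=N$'' in the $|N|=p$ case, which should refer to the factor whose order \emph{is} divisible by $p$ (that one contains $P=N$ and hence centralizes it, forcing the nontrivial part of $G/C$ to come from the opposite factor and yielding the desired divisibility contradiction).

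Your route differs from the paper's on both sides. For necessity, the paper splits into three parity cases (both odd; both even; mixed) and in each builds a metacyclic counterexample via H\"older's theorem, whereas your single construction $(C_{p^{\alpha}}\rtimes C_{q^{\beta}})\times C_{s}$ with $s=\operatorname{lcm}(u,v)$ handles everything uniformly. For sufficiency, the paper argues constructively: the Sylow tower gives a normal top-prime Sylow $P$, Schur--Zassenhaus yields a complement $Q$, and an explicit matrix bound on the image of $G$ in $\Aut(P/\Phi(P))\le\GL_{2}(p)$ (order dividing $p^{s}(p-1)^{2}$) shows that $Q$ centralizes $P$, so $G=P\times Q$ and induction on $Q$ finishes. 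You instead run a minimal-counterexample argument: $\Phi(G)=1$ forces the normal Sylow $P$ to be elementary abelian of rank at most $2$, and in the irreducible rank-$2$ case you use the parity consequence of \eqref{Mcond} plus Dickson's classification of subgroups of $\mathrm{PGL}_{2}(p)$ to embed the abelian $G/C$ in the non-split torus $\mathbb{F}_{p^{2}}^{\times}$, where fixing a line forces triviality. The paper's proof is more elementary and self-contained (no Dickson, no $\Phi(N)\le\Phi(G)$ lemma); yours explains more structurally why the rank-$2$ case is the crux and why the parity forced by \eqref{Mcond} is exactly what collapses it.
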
  
After presenting the necessary group-theoretic preliminaries in the next section,  we will
prove Theorem~\ref{main} in Section~3. The final section is devoted to discussing a few related open problems
and potential directions for further research.

\section{Preliminaries}
In this section we collect the necessary notation and fundamental results from
group theory that will be used throughout the remainder of the paper.

Let $G$ be a finite group, we denote its derived subgroup by $G'$, its center by $Z(G)$, and its Frattini subgroup by 
$\Phi(G)$. For a prime divisor $p$ of $|G|$, we write $\Syl_p(G)$ for the set of all Sylow $p$-subgroups of $G$.

A group $G$ is \textit{nilpotent} if it admits a central series of finite length. 
It is a standard result that a finite  group $G$ is nilpotent if and only if all of its Sylow subgroups are normal;
 in this case, $G$ is the (internal) direct product of its Sylow subgroups. 

 The following characterization of nilpotent groups will be useful in our later arguments.
\begin{proposition}[{\rm \cite[III, 2.3]{Huppert1967}}]\label{Nilp}
A finite group $G$ is nilpotent if and only if for any two elements $x,y\in G$ with $\gcd(|x|,|y|)=1$ we have $[x,y]=1$.
\end{proposition}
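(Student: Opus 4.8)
The plan is to prove the two implications separately, using only Sylow's theorems and the structural description of nilpotent groups recalled just above, namely that a finite group is nilpotent precisely when all of its Sylow subgroups are normal, in which case it is their internal direct product.

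For the forward implication I would use that direct product decomposition. Suppose $G$ is nilpotent, so $G=P_1\times\cdots\times P_r$ with $P_i\in\Syl_{p_i}(G)$, and let $x=(x_1,\dots,x_r)$, $y=(y_1,\dots,y_r)$ satisfy $\gcd(|x|,|y|)=1$. Then $p_i\mid|x|$ exactly when $x_i\neq 1$ and $p_i\mid|y|$ exactly when $y_i\neq 1$; since $|x|$ and $|y|$ have no common prime divisor, for each $i$ at least one of $x_i$, $y_i$ is trivial, hence $x_iy_i=y_ix_i$ for every $i$ and so $[x,y]=1$.

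For the converse, assume that coprime orders force commuting, and I would show directly that every Sylow subgroup of $G$ is normal. Fix a prime $p$ dividing $|G|$ and pick $P\in\Syl_p(G)$. The crucial step — the one doing all the work — is the observation that every $p'$-element of $G$ normalises $P$: indeed each element of $P$ is a $p$-element, so any element $y$ of order prime to $p$ has order coprime to the order of every $x\in P$, hence commutes with all of $P$ by hypothesis, and therefore centralises and a fortiori normalises $P$. Consequently $N_G(P)$ contains every $p'$-element of $G$, and in particular a full Sylow $\ell$-subgroup of $G$ for every prime $\ell\neq p$; thus $|N_G(P)|$ is divisible by $|G|_{p'}$, and since also $P\le N_G(P)$ gives $|G|_{p}=|P|\mid|N_G(P)|$, we get $|G|=|G|_p|G|_{p'}\mid|N_G(P)|$, so $N_G(P)=G$. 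Hence $P\trianglelefteq G$; as $p$ was arbitrary, all Sylow subgroups of $G$ are normal and $G$ is nilpotent by the characterisation recalled above. I do not expect a real obstacle here — once one notices that the hypothesis makes $p'$-elements normalise $P$, the order count is immediate; the only point requiring care is keeping the $p$-part and $p'$-part of $|N_G(P)|$ straight.
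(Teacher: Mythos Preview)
Your argument is correct in both directions. However, there is nothing to compare against: the paper does not supply its own proof of this proposition but simply quotes it from Huppert's textbook \cite[III, 2.3]{Huppert1967} and uses it as a black box. Your proof is self-contained and relies only on the Sylow characterisation of nilpotency that the paper recalls immediately before the proposition, so it would slot in without introducing any extra prerequisites.
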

In particular, every finite $p$-group is nilpotent. Finite $p$-groups possess the following basic properties:
\begin{proposition}[{\rm \cite[III, 3.14, 3.15, 3.17]{Huppert1967}}]\label{Basis}
Let $G$ be a finite $p$-group with $|G/\Phi(G)|=p^d$. Then the following statements hold:
\begin{enumerate}[\rm(a)]
\item $\Phi(G)=G'\mho_1(G)$ where $\mho_1(G)=\langle g^p\mid g\in G\rangle$.
\item $G/\Phi(G)$ is an elementary abelian $p$-group.
\item Every minimal generating set of $G$ is of cardinality $d$, which is called the rank of $G$.
\item For each $\sigma\in\Aut(G)$, the map $\bar\sigma$ defined by $(g\Phi(G))^{\bar\sigma}=g^\sigma\Phi(G)$
is an automorphism of $G/\Phi(G).$ Moreover, the map $\sigma\mapsto\bar\sigma$ defines
a group homomorphism $\Aut(G)\to\Aut(G/\Phi(G))$ whose kernel is a finite $p$-group.
\end{enumerate}
\end{proposition}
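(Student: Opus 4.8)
The plan is to derive all four statements from the single structural fact that a finite $p$-group $G$ is nilpotent, so that every maximal subgroup $M<G$ is normal with $|G:M|=p$; equivalently, each maximal subgroup is the kernel of an epimorphism $G\to\mathbb{Z}/p$. This reduces the study of $\Phi(G)$, the intersection of all maximal subgroups, to linear algebra over the field $\mathbb{F}_p$.

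For (a) and (b) I would set $N=G'\mho_1(G)$. Since $G'\le N$ the quotient $G/N$ is abelian, and since $\mho_1(G)\le N$ every element of $G/N$ has order dividing $p$; hence $G/N$ is elementary abelian, i.e.\ an $\mathbb{F}_p$-vector space. The identity $\Phi(G)=N$ then follows by double inclusion. Any maximal subgroup $M$ has index $p$, so $G/M$ is abelian of exponent $p$ and therefore $M\supseteq G'\mho_1(G)=N$; intersecting over all such $M$ gives $\Phi(G)\supseteq N$. Conversely, the maximal subgroups of $G$ correspond under the projection $G\to G/N$ to the hyperplanes of the vector space $G/N$, whose intersection is trivial; pulling back yields $\bigcap_M M=N$, so $\Phi(G)\le N$. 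This proves (a), and (b) is then immediate because $G/\Phi(G)=G/N$ is elementary abelian.

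For (c), the Burnside basis theorem, the crucial point is that $\Phi(G)$ consists of \emph{non-generators}: if $\langle S\rangle\Phi(G)=G$ then $\langle S\rangle=G$, since otherwise $\langle S\rangle$ would lie in some maximal subgroup $M$, which also contains $\Phi(G)$, forcing $\langle S\rangle\Phi(G)\le M<G$. Consequently a subset $S\subseteq G$ generates $G$ if and only if its image generates $G/\Phi(G)$. A minimal generating set of $G$ therefore maps to a minimal generating set of the $\mathbb{F}_p$-space $G/\Phi(G)$, that is, to an $\mathbb{F}_p$-basis, which has exactly $\dim_{\mathbb{F}_p}(G/\Phi(G))=d$ elements; this gives (c).

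Finally, for (d), since any $\sigma\in\Aut(G)$ permutes the maximal subgroups of $G$ it fixes their intersection, so $\Phi(G)$ is characteristic, $\bar\sigma$ is well defined, and functoriality makes $\sigma\mapsto\bar\sigma$ a homomorphism $\Aut(G)\to\Aut(G/\Phi(G))$. The substantive part, and the step I expect to be the main obstacle, is showing that its kernel $K$, consisting of the automorphisms acting trivially on $G/\Phi(G)$, equivalently those $\sigma$ with $[G,\sigma]=\langle g^{-1}g^\sigma\mid g\in G\rangle\le\Phi(G)$, is a $p$-group. I would prove the key lemma that a $p'$-group $A$ acting coprimely on $G$ with $[G,A]\le\Phi(G)$ must act trivially: the standard coprime-action identity $G=C_G(A)\,[G,A]$ gives $G=C_G(A)\,\Phi(G)$, and since $\Phi(G)$ consists of non-generators this forces $C_G(A)=G$, i.e.\ $[G,A]=1$. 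Applying this to $A=\langle\sigma\rangle$ for any $\sigma\in K$ of order prime to $p$ shows $\sigma=1$; hence $K$ has no nontrivial $p'$-element, so by Cauchy's theorem $|K|$ is a power of $p$, which establishes (d).
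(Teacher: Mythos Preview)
The paper does not prove this proposition at all; it is quoted as a standard result with a bare citation to Huppert~\cite[III, 3.14, 3.15, 3.17]{Huppert1967}. There is therefore no ``paper's approach'' to compare against.

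Your argument is correct and is essentially the classical proof one finds in Huppert or any comparable text. A couple of minor remarks: in part~(c) you implicitly use that the map $S\to\bar S$ is injective (so that $|S|=|\bar S|$), which follows from the nongenerator property you stated but could be made explicit; and in part~(d) your use of the coprime-action identity $G=C_G(A)\,[G,A]$ is perfectly legitimate, though a slightly more elementary alternative is to filter $G$ by a central series with elementary abelian factors and observe that any $\sigma\in K$ stabilises each term, hence acts unipotently on each factor, forcing $p$-power order. Either route yields~(d).
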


A group $G$ is \textit{solvable} if it has a finite sequence of normal subgroups 
\[
1=N_0<N_1<\cdots<N_s=G
\]
 such that each factor $N_{i}/N_{i-1}$ is elementary abelian of prime power order.
In particular, if all the factors are cyclic of prime order, then $G$ is termed \textit{supersolvable}.
Moreover, let $G$ be a finite group of order $|G|=\prod_{i=1}^sp_i^{e_i}$, where the primes
are ordered such that $p_1>p_2>\cdots>p_s$. We say that $G$ has a \textit{Sylow tower} if  
there exists a sequence of normal subgroups 
\[
1=N_0<N_1<\cdots<N_s=G
\]
 such that for each $i=1,2,\ldots,s$, the order of $N_i$ is  $\prod_{j=1}^ip_j^{e_j}$.

\begin{proposition}[{\cite[VI 9.1]{Huppert1967}}]\label{Super}
Every finite supersolvable group $G$ has a Sylow tower. In particular,
 if $p$ is the largest prime factor of $|G|$, then the Sylow $p$-group of $G$ is normal.
\end{proposition}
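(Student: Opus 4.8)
The plan is to prove the statement by induction on $|G|$, treating the ``in particular'' clause as the engine of the induction. The two facts I would take as given (both immediate from the definition of supersolvability via a normal series with cyclic factors of prime order) are: (i) every quotient of a supersolvable group is again supersolvable, since for $N \trianglelefteq G$ the images $N_iN/N$ of a defining series form, after deleting repetitions, a normal series of $G/N$ whose factors are cyclic of prime order; and (ii) a nontrivial supersolvable group $G$ contains a normal subgroup $N \trianglelefteq G$ of prime order, namely the bottom term $N_1$ of such a series. Throughout, let $p$ be the largest prime dividing $|G|$ and let $P \in \Syl_p(G)$.

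First I would establish that $P \trianglelefteq G$. Pick $N \trianglelefteq G$ of prime order $r$ as in (ii); since $p$ is the largest prime divisor of $|G|$ we have $r \le p$. The quotient $G/N$ is supersolvable of strictly smaller order, and $p$ remains its largest prime divisor (the $p$-part of $|G/N|$ equals that of $|G|$ when $r \ne p$, while if $r = p$ either $P = N$ is already normal and we are done, or $p$ still divides $|G/N|$). The Sylow $p$-subgroups of $G/N$ are exactly the images $PN/N$, so applying the inductive hypothesis to $G/N$ gives $PN/N \trianglelefteq G/N$, whence $PN \trianglelefteq G$ by the correspondence theorem. It then remains to descend from $PN$ to $P$, which is the crux of the argument: I claim $P$ is the unique Sylow $p$-subgroup of $PN$. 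If $r = p$ then $N \le P$, so $PN = P$ and this is trivial; if $r \ne p$ then $P \cap N = 1$ and $|PN| = |P|\,r$ with $r$ a prime strictly smaller than $p$, so the number of Sylow $p$-subgroups of $PN$ divides $r$ and is congruent to $1$ modulo $p$, forcing it to equal $1$. Being the unique Sylow $p$-subgroup, $P$ is characteristic in $PN$, and since $PN \trianglelefteq G$ we conclude $P \trianglelefteq G$, as required.

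With the normality of $P$ in hand, the Sylow tower follows by feeding the result back into the induction. Writing $|G| = \prod_{j=1}^s p_j^{e_j}$ with $p_1 > p_2 > \cdots > p_s$, set $N_1 = P$, which is normal of order $p_1^{e_1}$. The quotient $G/N_1$ is supersolvable of smaller order with largest prime $p_2$, so by induction it has a Sylow tower $1 < \overline{M}_1 < \cdots < \overline{M}_{s-1} = G/N_1$ with $|\overline{M}_i| = \prod_{j=2}^{i+1} p_j^{e_j}$. Pulling these back through the natural correspondence, i.e.\ taking $N_{i+1}$ to be the preimage of $\overline{M}_i$ in $G$, yields normal subgroups $1 < N_1 < N_2 < \cdots < N_s = G$ with $|N_i| = \prod_{j=1}^i p_j^{e_j}$, which is precisely a Sylow tower for $G$. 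The main obstacle in this scheme is the single step of descending from $PN \trianglelefteq G$ to $P \trianglelefteq G$; everything else is bookkeeping around the induction, and it is exactly the inequality $r < p$, guaranteed by choosing $p$ to be the \emph{largest} prime, that makes $P$ characteristic in $PN$ and thereby drives the whole argument.
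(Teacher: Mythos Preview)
The paper does not supply a proof of this proposition; it is quoted from Huppert's \emph{Endliche Gruppen I} (VI, 9.1) and used as a black box in the later arguments. Your inductive argument is correct and follows the classical route: take a normal subgroup $N$ of prime order $r$ at the bottom of a supersolvable series, apply the inductive hypothesis to $G/N$ to obtain $PN \trianglelefteq G$, and then observe that $P$ is characteristic in $PN$ (trivially if $r=p$, since then $N\le P$ and $PN=P$; and by the Sylow count when $r<p$, since the number of Sylow $p$-subgroups of $PN$ divides $r<p$ and is $\equiv 1 \pmod p$, hence equals $1$), whence $P \trianglelefteq G$. The full Sylow tower then follows by iterating on $G/P$. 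There is nothing in the paper itself to compare your proof against.
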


An important subclass of supersolvable groups is the class of metacyclic groups. Recall that
a group $G$ is  \textit{metacyclic} if it is an extension of a cyclic group by another cyclic group. In 
other words, $G$ contains a cyclic normal subgroup $A$ such that the quotient $G/A$
is also cyclic. One of the key results about metacyclic groups is H\"older's theorem, which is fundamental in this context.
\begin{proposition}[{\rm \cite[I, 14.8]{Huppert1967}}]\label{Holder}
If $G$ is an extension of a finite cyclic group $A$ of order $m$ by a finite cyclic group $B$ of order $n$, then
$G$ has a presentation of the form
\[
G=\langle a,b\mid a^m=1, b^n=a^s, a^b=a^r\rangle,
\]
where $r,s\in\mathbb{Z}_m$ satisfy the following congruences:
\[
(r-1)s\equiv0\pmod{m}\quad\text{and}\quad r^n\equiv1\pmod{m}.
\]
Conversely, a group $G$ defined by the presentation above and the numerical conditions above
determines an extension of a cyclic subgroup of order $m$ by a cyclic group of order $n$.
\end{proposition}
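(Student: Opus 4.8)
The plan is to treat the two implications separately, the converse being the substantive one. For the forward implication, suppose $G$ is an extension of a cyclic normal subgroup $A$ of order $m$ by a cyclic quotient of order $n$. First I would fix a generator $a$ of $A$, so that $A=\sg{a}$ and $a^m=1$, and choose an element $b\in G$ whose image generates $G/A$; since $A=\sg{a}$ and $bA$ generates $G/A$, the two elements $a,b$ generate $G$. Because $A\trianglelefteq G$ is cyclic, conjugation by $b$ is an automorphism of $A$, and every automorphism of a cyclic group of order $m$ raises a generator to a power coprime to $m$, so $a^b=a^r$ with $\gcd(r,m)=1$. Since $bA$ has order $n$ we have $b^n\in A$, say $b^n=a^s$. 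It then remains to extract the two congruences: iterating conjugation by $b$ a total of $n$ times gives $a^{b^n}=a^{r^n}$, while $b^n=a^s\in A$ and $A$ abelian force $a^{b^n}=a$, whence $r^n\equiv 1\pmod m$; and since $b$ commutes with $b^n=a^s$ we get $(a^s)^b=a^s$, that is $a^{rs}=a^s$, so $(r-1)s\equiv 0\pmod m$.

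To conclude the forward direction I would verify that the displayed relations actually present $G$. Let $\widetilde G$ be the abstract group defined by the presentation. The relations hold in $G$, so there is a surjection $\widetilde G\twoheadrightarrow G$. On the other hand, in $\widetilde G$ the relation $a^b=a^r$ together with $\gcd(r,m)=1$ shows that $\sg{a}$ is normalized by $b$, hence normal, with $|\sg{a}|\le m$; and $b^n=a^s\in\sg{a}$ shows that $\widetilde G/\sg{a}$ is cyclic of order at most $n$. Thus $|\widetilde G|\le mn=|G|$, and the surjection is an isomorphism.

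For the converse I would realize the presentation by a concrete group of order $mn$. From $r^n\equiv1\pmod m$ we obtain $\gcd(r,m)=1$, so $\theta\colon a\mapsto a^r$ is an automorphism of the cyclic group $C_m=\sg{a}$. Form the (infinite) semidirect product $P=C_m\rtimes_\theta\sg{t}$ with $\sg{t}\cong\mathbb Z$ acting via $a^t=a^r$, and set $z=t^na^{-s}$. The crux is to show that $z$ is central of infinite order: the condition $r^n\equiv1\pmod m$ makes $t^n$ commute with $a$, so $t^n$ is central, while $(r-1)s\equiv0\pmod m$ makes $a^s$ fixed by $\theta$, so $a^s$ is central; and the projection $P\to P/C_m\cong\mathbb Z$ sends $z$ to $n\neq0$, forcing $z$ to have infinite order with $\sg{z}\cap C_m=1$. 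Passing to $H=P/\sg{z}$ then yields a group in which $\bar a$ has order exactly $m$, $\bar a^{\bar b}=\bar a^r$, and $\bar b^n=\overline{t^n}=\overline{a^s}=\bar a^s$; moreover $H/\sg{\bar a}\cong\mathbb Z/n\mathbb Z$, so $H$ is an extension of a cyclic group of order $m$ by a cyclic group of order $n$ with $|H|=mn$. Finally, since $\widetilde G$ surjects onto $H$ and $|\widetilde G|\le mn$ by the order count above, we conclude $\widetilde G\cong H$, which is exactly the required extension.

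The main obstacle is the converse, and within it precisely the claim that $z=t^na^{-s}$ is central of infinite order: this is the one place where both numerical conditions are indispensable, $r^n\equiv1\pmod m$ guaranteeing centrality of $t^n$ and $(r-1)s\equiv0\pmod m$ guaranteeing centrality of $a^s$. If either congruence fails, $z$ is no longer central, the quotient $P/\sg{z}$ collapses, and the order drops below $mn$; controlling this, rather than the routine relation-checking, is where the real content lies.
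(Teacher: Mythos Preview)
Your argument is correct and is essentially the classical proof of H\"older's theorem. However, there is nothing to compare against in this paper: Proposition~\ref{Holder} is one of the preliminary results that the paper merely quotes from Huppert's textbook (\cite[I, 14.8]{Huppert1967}) without giving any proof of its own. The paper uses the proposition as a black box, chiefly in Lemmas~\ref{Odd}--\ref{OddEven} to guarantee that certain metacyclic presentations define groups of the expected order.

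For what it is worth, your write-up is clean. The forward direction is routine; in the converse, your realization via the infinite semidirect product $C_m\rtimes_\theta\mathbb{Z}$ and the quotient by the central element $z=t^na^{-s}$ is a perfectly standard way to exhibit a concrete model of order exactly $mn$, and you correctly isolate the two places where the numerical hypotheses enter (centrality of $t^n$ from $r^n\equiv1$, centrality of $a^s$ from $(r-1)s\equiv0$). The only cosmetic point is that in your order-count paragraph the coprimality $\gcd(r,m)=1$ is not an extra assumption but a consequence of $r^n\equiv1\pmod m$, which you do note later; it might read more smoothly to invoke that once, up front.
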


More generally, a finite group $G$ is termed \textit{bicyclic}, or more specifically \textit{$(m,n)$-bicyclic},
if it admits a factorization
$G=\langle a\rangle\langle b\rangle$ of two cyclic subgroups $\langle a\rangle$ and $\langle b\rangle$, 
where $|a|=m$ and $|b|=n$.
\begin{proposition}[{\rm \cite[III, 11.5; VI, 10.1]{Huppert1967}}]\label{Bicyclic}
Every bicyclic group is supersolvable. Moreover, if $p>2$ then every bicyclic $p$-group is metacyclic.
\end{proposition}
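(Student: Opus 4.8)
Both assertions concern a bicyclic group $G=\langle a\rangle\langle b\rangle$, and I would prove each by induction on $|G|$. The engine of the induction is that bicyclicity is inherited by the relevant smaller groups. First, it passes to quotients: if $N\trianglelefteq G$, then $G/N=(\langle a\rangle N/N)(\langle b\rangle N/N)$ is again a product of two cyclic groups. Second, via Dedekind's modular law it passes to over-groups of a factor: if $\langle a\rangle\le H\le G$, then $H=H\cap\langle a\rangle\langle b\rangle=\langle a\rangle(H\cap\langle b\rangle)$, and $H\cap\langle b\rangle$ is cyclic, so $H$ is bicyclic. Solvability, which underlies both parts, is available for free: since $\langle a\rangle$ and $\langle b\rangle$ are abelian, It\^o's theorem~\cite{Ito1955} gives $G''=1$, so $G$ is metabelian and in particular every minimal normal subgroup of $G$ is an elementary abelian $p$-group.

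For supersolvability the goal is to show every chief factor is cyclic. Given a minimal normal subgroup $N$, the quotient $G/N$ is bicyclic and hence supersolvable by induction, so everything reduces to proving $|N|=p$. Here lies the main obstacle, and it is instructive to see why the obvious approach fails. One is tempted to pass to the faithful irreducible action of $G/C_G(N)$ on $N$ over the prime field and argue that a product of two cyclic groups can only act in dimension one; but this is simply false, since a dihedral group $D_{2q}=\langle r\rangle\langle s\rangle$ (with $|r|=q$, $|s|=2$) is itself bicyclic and acts faithfully and irreducibly on a $2$-dimensional module over a suitable prime field. Thus bicyclicity of the acting quotient alone does \emph{not} force $\dim N=1$; the constraint that does the work is that $N$ sits inside the factorization $\langle a\rangle\langle b\rangle$ and meets the two cyclic factors rigidly. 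The essential step is therefore to carry the \emph{exact} factorization $G=\langle a\rangle\langle b\rangle$ through the induction, controlling $N\cap\langle a\rangle$ and $N\cap\langle b\rangle$, and it is this book-keeping, rather than any single inequality, where the real difficulty of the supersolvability statement resides.

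For the metacyclic assertion assume $p>2$, and (the abelian case being immediate) that $G$ is non-cyclic, so that $G/\Phi(G)\cong C_p\times C_p$ by Proposition~\ref{Basis}. Since $a,b$ generate $G$, neither lies in $\Phi(G)$ (else $G/\Phi(G)$ would be cyclic and $G$ cyclic), so $M:=\langle a\rangle\Phi(G)$ is a maximal, hence normal, subgroup of index $p$; by the Dedekind reduction above $M$ is bicyclic, and so by induction $M$ is metacyclic, with $M'$ cyclic. The plan is then to promote this structure to $G$ and produce a cyclic normal subgroup $C\trianglelefteq G$ with $G/C$ cyclic. This is precisely the step that forces the hypothesis $p>2$: for odd $p$ the regularity of $p$-groups keeps the $p$-th power map and the commutator map well behaved (so that, for instance, one can control $G'=\langle[a,b]\rangle^{G}$ and show it remains cyclic in $G$), whereas for $p=2$ these tools break down and genuinely non-metacyclic bicyclic $2$-groups arise. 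I expect this lift from the index-$p$ subgroup $M$ to $G$ using the oddness of $p$ to be the crux of this half of the proof; the remaining verification that the resulting cyclic normal subgroup has cyclic quotient is then routine.
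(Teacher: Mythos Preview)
The paper does not prove this proposition at all: it is quoted without proof as a standard result from Huppert's textbook \cite[III, 11.5; VI, 10.1]{Huppert1967}, and is used only as a black box in the proof of Theorem~\ref{main}. There is therefore no ``paper's own proof'' to compare your attempt against.

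As for your proposal itself, it is an honest outline rather than a proof, and you flag this yourself. For the supersolvability half you correctly isolate the crux --- showing that a minimal normal subgroup $N$ of a bicyclic group has prime order --- and you rightly observe that the mere bicyclicity of the acting quotient $G/C_G(N)$ is insufficient. But you then stop, saying only that one must ``carry the exact factorization through the induction'' and that this ``book-keeping'' is where the real difficulty lies. That is precisely the content of the theorem; you have located the gap but not bridged it. The standard argument (Huppert, Douglas) does exactly this by analyzing how $N$ sits relative to $\langle a\rangle$ and $\langle b\rangle$ and using an order/index count, and that analysis is the substance you are missing.

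Similarly for the metacyclic half: your inductive framework (pass to a maximal subgroup $M=\langle a\rangle\Phi(G)$, use that $M$ is metacyclic) is a reasonable opening, but the sentence ``I expect this lift from the index-$p$ subgroup $M$ to $G$ using the oddness of $p$ to be the crux'' is again a statement of where the work is, not the work itself. Huppert's argument in \cite[III, 11.5]{Huppert1967} exploits regularity of $p$-groups for odd $p$ (or equivalently control of $\mho_1$ and $G'$) in a concrete way that your sketch does not supply. So your proposal identifies the right pressure points in both halves but leaves both of the essential arguments unwritten.
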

\begin{proposition}[{\rm \cite[VI, 4.7]{Huppert1967}}]\label{Prod}
Let $G=AB$ be a factorization of a group $G$. For each prime $p$, 
there exist Sylow $p$-subgroups $P\in\Syl_p(G)$, $P_1\in\Syl_p(A)$ and $P_2\in\Syl_p(B)$ 
such that $P=P_1P_2$.
\end{proposition}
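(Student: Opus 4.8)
The plan is to argue one prime at a time and to reduce the whole statement to the existence of a single Sylow $p$-subgroup of $G$ that is simultaneously compatible with both factors. Fix a prime $p$ and write $|G|_p,\,|A|_p,\,|B|_p,\,|A\cap B|_p$ for the $p$-parts of the corresponding orders. Since $G=AB$ gives $|G|=|A|\,|B|/|A\cap B|$, comparing $p$-parts yields the key identity $|G|_p=|A|_p\,|B|_p/|A\cap B|_p$. This arithmetic identity is the engine of the proof: it is precisely what will force a product of Sylow subgroups of $A$ and $B$ to fill out an entire Sylow subgroup of $G$.

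With this in hand, I would isolate the essential claim: there exists $P\in\Syl_p(G)$ such that $P\cap A\in\Syl_p(A)$ and $P\cap B\in\Syl_p(B)$ hold at once. Granting this, put $P_1=P\cap A$ and $P_2=P\cap B$. Both are contained in $P$, so the product set $P_1P_2$ lies in $P$, and $|P_1P_2|=|P_1|\,|P_2|/|P_1\cap P_2|$. Because $P_1\cap P_2=P\cap(A\cap B)$ is a $p$-subgroup of $A\cap B$, we have $|P_1\cap P_2|\le|A\cap B|_p$, and therefore $|P_1P_2|\ge|A|_p\,|B|_p/|A\cap B|_p=|G|_p=|P|$. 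Since also $P_1P_2\subseteq P$, this forces $P_1P_2=P$ (and, as a byproduct, $P_1\cap P_2\in\Syl_p(A\cap B)$). Thus the decomposition $P=P_1P_2$ drops out of the order identity as soon as a compatible $P$ is available; in particular one never has to verify separately that $P_1$ and $P_2$ commute or that $P_1P_2$ is a subgroup.

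The main obstacle is producing the compatible $P$. Each one-sided version is easy: extending a Sylow $p$-subgroup of $A$ to one of $G$ shows that some $Q\in\Syl_p(G)$ satisfies $Q\cap A\in\Syl_p(A)$, and symmetrically for $B$; the difficulty is meeting both conditions with the same $Q$, and this is exactly where the factorization $G=AB$ must enter in an essential way (this is the substance of Wielandt's theorem on factorized groups). I would attack it by a counting argument modulo $p$, using the standard fact that the number of members of $\Syl_p(G)$ containing a prescribed $p$-subgroup is congruent to $1$ modulo $p$. Counting the triples $(Q,R,S)$ with $R\in\Syl_p(A)$, $S\in\Syl_p(B)$ and $R,S\le Q\in\Syl_p(G)$ by first summing over $Q$ shows their number is congruent modulo $p$ to $|\{Q\in\Syl_p(G):Q\cap A\in\Syl_p(A),\ Q\cap B\in\Syl_p(B)\}|$, whereas summing over pairs $(R,S)$ reduces the same number, modulo $p$, to the count of pairs for which $\langle R,S\rangle$ is a $p$-group. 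The crux—and the step I expect to be hardest—is to show that this last count is not divisible by $p$; here the identity $G=AB$, equivalently the transitivity of the action of $B$ on the coset space $G/A$, is what guarantees a sufficient supply of pairs $(R,S)$ generating a $p$-group. Once that count is shown to be nonzero modulo $p$, a compatible $P$ exists and the proposition follows.
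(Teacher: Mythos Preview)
The paper does not prove this proposition; it merely quotes it with a reference to Huppert. So there is no ``paper's proof'' to compare with, and your attempt must be judged on its own.

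Your reduction step is clean and correct: once you have a single $P\in\Syl_p(G)$ with $P\cap A\in\Syl_p(A)$ and $P\cap B\in\Syl_p(B)$, the order identity $|G|_p=|A|_p|B|_p/|A\cap B|_p$ forces $(P\cap A)(P\cap B)=P$ exactly as you wrote. The problem is the part you yourself flag as ``the crux'': you never actually show that the number of pairs $(R,S)$ with $\langle R,S\rangle$ a $p$-group is nonzero modulo $p$, and your appeal to ``transitivity of $B$ on $G/A$'' is not an argument. As it stands this is a genuine gap: the counting scheme is set up correctly, but the step that uses $G=AB$ in an essential way is missing.

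In fact the standard proof (the one in Huppert that the paper is citing) bypasses the counting entirely with a two-line conjugation trick, and this is what you should do. Pick any $P_1\in\Syl_p(A)$ and $P_2\in\Syl_p(B)$, and extend each to a Sylow $p$-subgroup of $G$, say $P_1\le Q_1$ and $P_2\le Q_2$. By Sylow's theorem $Q_2=Q_1^{g}$ for some $g\in G$, and since $G=AB$ we may write $g=ab$ with $a\in A$, $b\in B$. Set $P:=Q_1^{a}$. Then $P_1^{a}\le Q_1^{a}=P$ and $P_1^{a}\in\Syl_p(A)$ because $a\in A$; also $P_2^{b^{-1}}\le Q_2^{b^{-1}}=Q_1^{a}=P$ and $P_2^{b^{-1}}\in\Syl_p(B)$ because $b\in B$. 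Hence $P$ contains a Sylow $p$-subgroup of $A$ and of $B$ simultaneously, which is exactly the ``compatible $P$'' you were looking for; your order argument then finishes the proof. This is where the factorization $G=AB$ really enters, and it replaces the whole mod-$p$ counting apparatus.
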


A subgroup $H$ of $G$ is called a \textit{Hall subgroup} if $\gcd(|H|,|G:H|)=1$.
The following Schur-Zassenhauss theorem is well known.
\begin{proposition}[{\rm  \cite[I, 18.2]{Huppert1967}}]\label{Zass}
Let $G$ be a finite group. If $H$ is a normal Hall subgroup of $G$, then $H$
has a complement in $G$ and all complements of $H$ are pairwise conjugate in $G$.
\end{proposition}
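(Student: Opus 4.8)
The plan is to prove the two assertions---existence of a complement and conjugacy of all complements---one at a time, in each case first disposing of the situation where $H$ is abelian and then passing to the general case by induction on $|G|$. Throughout set $h=|H|$, $k=[G:H]$ (so $\gcd(h,k)=1$ by the Hall hypothesis) and $Q=G/H$. For existence when $H$ is abelian, I would fix a transversal $\{t_x:x\in Q\}$ of $H$ in $G$ and measure its failure to be a subgroup by the factor set $f(x,y)\in H$ given by $t_xt_y=f(x,y)\,t_{xy}$; associativity forces the $2$-cocycle identity. Summing this identity over the last variable (writing $H$ additively) and putting $d(x)=\sum_{y\in Q}f(x,y)$ yields a relation of the form $k\,f(x,y)={}^{x}d(y)+d(x)-d(xy)$. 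Since $\gcd(h,k)=1$, multiplication by $k$ is an automorphism of $H$, so this exhibits $f$ as a coboundary; correcting the transversal by the resulting $1$-cochain makes it multiplicative, and the corrected transversal is the desired complement $K$.

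For the general existence statement I would induct on $|G|$, the base case $H=1$ being trivial. Choose a prime $p\mid h$ and $P\in\Syl_p(H)$; the Frattini argument gives $G=H\,N_G(P)$. If $N_G(P)\ne G$, then $H\cap N_G(P)$ is a normal Hall subgroup of the strictly smaller group $N_G(P)$ of index $k$, so the inductive hypothesis furnishes a complement there of order $k$, which (being coprime to $h$) meets $H$ trivially and is therefore a complement of $H$ in $G$. If instead $N_G(P)=G$, i.e.\ $P\trianglelefteq G$, then $Z=Z(P)$ is a nontrivial abelian subgroup, characteristic in $P$ and hence normal in $G$; applying the inductive hypothesis to $H/Z\trianglelefteq G/Z$ produces a subgroup $L\ge Z$ with $[L:Z]=k$, and the already-settled abelian case applied to $Z\trianglelefteq L$ yields a subgroup of order $k$ that is again a complement of $H$ in $G$.

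Conjugacy is handled by a parallel pair of arguments. When $H$ is abelian, two complements $K_1,K_2$ determine sections $x\mapsto u_x\in K_1$ and $x\mapsto v_x\in K_2$ of the projection $G\to Q$, and $c(x):=v_xu_x^{-1}\in H$ satisfies the crossed-homomorphism identity $c(xy)=c(x)+{}^{x}c(y)$; summing over $y$ and again inverting multiplication by $k$ shows $c(x)=a-{}^{x}a$ for a single $a\in H$, whence $K_1$ and $K_2$ are conjugate by an element of $H$. For the general case I would induct on $|G|$ using a minimal normal subgroup $M\le H$ of $G$: after conjugating one complement so that $K_1M=K_2M=:L$, the abelian conjugacy result applied to $M\trianglelefteq L$ finishes the argument, provided $M$ is abelian. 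This is guaranteed once one of $H$, $G/H$ is solvable (then $M$ may be chosen elementary abelian), and since $\gcd(h,k)=1$ forces one of $h,k$ to be odd, the Feit--Thompson theorem secures the requisite solvability in full generality.

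The engine driving both halves of the proof---and the single point on which everything turns---is the coprimality $\gcd(h,k)=1$, which makes multiplication by $k$ invertible on each abelian section and thereby forces the relevant first and second cohomology to vanish; without it the averaging trick, and with it the whole theorem, fails. The one genuinely deeper ingredient is the conjugacy induction's reliance on the solvability of $H$ or of $G/H$: this is elementary in the cases that arise in practice (where one factor is visibly solvable), but establishing the stated generality requires the Feit--Thompson odd-order theorem.
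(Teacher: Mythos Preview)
The paper does not supply its own proof; the proposition is quoted with a citation to Huppert's \emph{Endliche Gruppen~I}, where the classical Schur--Zassenhaus argument appears. Your outline is precisely that classical argument, so there is nothing further to compare.

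One point in your conjugacy sketch deserves tightening. You assert that if one of $H$ or $G/H$ is solvable then a minimal normal subgroup $M\le H$ of $G$ may be chosen elementary abelian, but this fails when only $G/H$ is solvable: take $G=A_5\times\mathbb{Z}_7$ with $H=A_5$, where the unique $G$-minimal normal subgroup contained in $H$ is $A_5$ itself. In that situation the standard route runs the induction from the quotient side instead: choose $M/H$ minimal normal in $G/H$, so that $M/H$ is an elementary abelian $p$-group; the subgroups $K_i\cap M$ are then Sylow $p$-subgroups of $M$, hence conjugate, and a Frattini argument in the normalizer of their common conjugate reduces $|G|$. With this amendment your plan is complete and correct.
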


\section{Proof of Theorem~\ref{main}}
Before proving Theorem~\ref{main}, we first present several technical lemmas that will be useful for the proof.
\begin{lemma}\label{Rem}
Two positive integers $m$ and $n$ do not satisfy the condition \eqref{Mcond} if and only if $m$ and $n$
satisfy one of the following conditions: 
 \begin{enumerate}[\rm(a)]
 \item both $m$ and $n$ are odd, and there exist a pair of primes $p\neq q$ 
 with $p|m$ and $q|n$  such that either $p| (q-1)$ or $q|( p-1)$,
 \item both  $m$ and $n$ are even, but not both equal to powers of the common prime $2$,
 \item $m$ is even and $n>1$ is odd, or $n$ is even and $m>1$ is odd.
   \end{enumerate}
\end{lemma}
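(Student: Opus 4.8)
The plan is to first rewrite condition \eqref{Mcond} purely in terms of the prime divisors of $m$ and $n$, and then run a short case analysis on their parities. If $m=\prod_{p\mid m}p^{e_p}$, then $\rad(m)=\prod_{p\mid m}p$ and, the factors being pairwise coprime, $\phi(\rad(m))=\prod_{p\mid m}(p-1)$; likewise for $n$. By (the standard consequence of) Euclid's lemma, a prime $r$ divides $\phi(\rad(m))$ if and only if $r\mid p-1$ for some prime $p\mid m$, so $\gcd\big(n,\phi(\rad(m))\big)>1$ precisely when there are primes $p\mid m$ and $q\mid n$ with $q\mid p-1$; symmetrically $\gcd\big(m,\phi(\rad(n))\big)>1$ precisely when there are primes $p\mid m$ and $q\mid n$ with $p\mid q-1$. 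The first step is therefore to record the equivalence: \emph{condition \eqref{Mcond} fails if and only if there exist primes $p\mid m$ and $q\mid n$ with $p\mid q-1$ or $q\mid p-1$}. In either alternative $p\ne q$ automatically, since $\gcd(p,p-1)=1$ rules out $p\mid p-1$.

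With this reformulation in hand, I would split according to the parities of $m$ and $n$, the three resulting regimes being mutually exclusive and exhausting all pairs. If $m$ and $n$ are both odd, then any witnessing primes $p,q$ above are odd and distinct, and conversely, so in this regime failure of \eqref{Mcond} is exactly statement (a). Next, suppose exactly one of $m,n$ is even, say $m$ even and $n$ odd (the other case being symmetric and giving the remaining clause of (c)). If $n=1$ then $\phi(\rad(n))=\phi(1)=1$ and $\phi(\rad(m))$ is coprime to $1$, so \eqref{Mcond} holds; if $n>1$, pick an odd prime $q\mid n$ and take $p=2\mid m$, so that $p\mid q-1$ and \eqref{Mcond} fails. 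Hence in this regime failure of \eqref{Mcond} is equivalent to $n>1$, that is, to (c).

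Finally, suppose $m$ and $n$ are both even. If both are powers of $2$, then $\phi(\rad(m))=\phi(\rad(n))=\phi(2)=1$, so \eqref{Mcond} holds; if, say, $m$ is not a power of $2$, it has an odd prime divisor $p$, and then $2\mid n$ together with $2\mid p-1$ gives $2\mid\gcd\big(n,\phi(\rad(m))\big)$, so \eqref{Mcond} fails (symmetrically if $n$ is not a power of $2$). Thus in this regime failure of \eqref{Mcond} is equivalent to ``$m$ and $n$ are not both powers of $2$'', that is, to (b). Since statements (a), (b), (c) are precisely the conditions attached to the three parity regimes, assembling the three cases yields the lemma.

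There is no real obstacle here: the whole argument is elementary number theory. The only points needing a little care are the degenerate cases in which $\phi(\rad(\cdot))$ collapses to $1$ (namely when $m$ or $n$ equals $1$ or is a power of $2$), and the observation that $p\mid q-1$ or $q\mid p-1$ forces $p\ne q$ — which is exactly why statement (a) may be phrased with the explicit hypothesis $p\ne q$ without changing its meaning.
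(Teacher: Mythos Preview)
Your proof is correct and follows essentially the same approach as the paper: both arguments reduce to the factorization $\phi(\rad(m))=\prod_{p\mid m}(p-1)$ and then run a parity case analysis on $m$ and $n$. The only difference is organizational---you state the prime-witness reformulation of the failure of \eqref{Mcond} explicitly at the outset and then handle both implications within each parity regime at once, whereas the paper proves the forward and backward implications separately---but the mathematical content is identical.
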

\begin{proof}
Suppose first that $m$ and $n$ satisfy one of the conditions (a)--(c). We proceed to show that  either 
$\gcd(m,\phi(\rad(n))\neq1$ or $\gcd(n,\phi(\rad(m))\neq1$. 

   In case (a), both $p$ and $q$ are odd primes. If $p|(q-1)$ then $p|\gcd(m,\phi(\rad(n))$; if $q|(p-1)$ then $q|\gcd(n,\phi(\rad(m))$.
   
   In case (b), we assume $m=2^em_1$ and $n=2^fn_1$ where $e\geq1$, $f\geq1$, and both $m_1$ and $n_1$ are odd but 
    not both equal to $1$. If $m_1=1$, then $n_1>1$, and so $2|\gcd(m,\phi(\rad(n))$. If $m_1>1$, 
then $2|\gcd(m,\phi(\rad(n)))$. 

In case (c), if $m$ is even and $n>1$ is odd, then $2\mid\gcd(m,\phi(\rad(n))$;
if $m>1$ is odd and $n$ is even, then $2\mid\gcd(n,\phi(\rad(m))$. 

Now suppose that $m$ and $n$ satisfy none of the conditions (a)--(c). 

If both $m$ and $n$ are odd, then either one of $m$ and $n$ is $1$, or  they
are powers of a common odd prime, or they contains at lease two distinct prime divisors, and
for any pair of primes $p\neq q$ with $p|m$ and $q|n$,  we have $p\nmid (q-1)$ and $q\nmid( p-1).$
In this case, it is evident that the condition \eqref{Mcond} holds. 

If both $m$ and $n$ are even, then they must be powers of $2$, so the condition \eqref{Mcond} holds. 

If $m$ is even and $n$ is odd, then $n=1$; if $m$ is odd and $n$ is even, then $m=1$. In both cases,
the condition \eqref{Mcond}  holds as well.

Thus, we conclude the proof.
\end{proof}

\begin{corollary}\label{Rem1}
Two positive integers $m$ and $n$ satisfy the condition \eqref{Mcond} if and only if they satisfy one of the following conditions:
 \begin{enumerate}[\rm(a)]
 \item $m=1$ or $n=1$,
 \item $m=p^e$ and $n=p^f$ for some prime $p$ and integers $e,f\geq1$,
  \item Both $m$ and $n$ are odd numbers, and for each pair of primes $p\neq q$ with $p|m$ and $q|n$, 
  we have $p\nmid (q-1)$ and $q\nmid( p-1).$
 \end{enumerate}
\end{corollary}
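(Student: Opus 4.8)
The plan is to recast the arithmetic condition~\eqref{Mcond} in a symmetric ``pairwise'' form and then match that form against (a)--(c) by a brief case analysis on the parities of $m$ and $n$; an equivalent shortcut is simply to negate Lemma~\ref{Rem}.

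For the recasting: since $\phi$ is multiplicative and $\rad(k)$ is a product of distinct primes, $\phi(\rad(k))=\prod_{p\mid k}(p-1)$. Hence $\gcd\big(m,\phi(\rad(n))\big)=1$ holds if and only if no prime divisor $p$ of $m$ divides any $q-1$ with $q\mid n$; and since $p\mid q-1$ forces $p\neq q$, this is exactly the statement that $p\nmid q-1$ for every pair of primes $p\neq q$ with $p\mid m$ and $q\mid n$. Treating $\gcd\big(n,\phi(\rad(m))\big)$ the same way, condition~\eqref{Mcond} becomes
\[
(\star)\qquad\text{for all primes }p\neq q\text{ with }p\mid m\text{ and }q\mid n:\quad p\nmid q-1\ \text{ and }\ q\nmid p-1 .
\]

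Next I would split on the parities of $m$ and $n$. If $m=1$ or $n=1$, then one side has no prime divisor, $(\star)$ is vacuous, and (a) holds. If $m,n>1$ are both even, then $2$ divides both; were $n$ to have an odd prime divisor $q$, then $2\mid q-1$ would violate $(\star)$, so $(\star)$ forces $n$, and symmetrically $m$, to be a power of $2$, which is case (b) with $p=2$; conversely two powers of $2$ share only the prime $2$, so there is no pair $p\neq q$ as above and $(\star)$ holds vacuously --- and here neither (a) nor (c) can hold. If exactly one of $m,n$ is even and the other odd and $>1$, the odd one has an odd prime divisor $q$ with $2\mid q-1$, so $(\star)$ fails, and one checks directly that none of (a)--(c) holds either. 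Finally, if $m$ and $n$ are both odd and $>1$, then $(\star)$ is literally the divisibility clause of (c); moreover (b) can then occur only with an odd prime $p$, forcing $m,n$ to be powers of $p$ and making $(\star)$ vacuous, while (c) is precisely ``$m,n$ odd'' together with $(\star)$. Thus in this regime ``(b) or (c)'' is equivalent to $(\star)$. Assembling the four cases gives $(\star)\iff\text{(a) or (b) or (c)}$, which together with the recasting proves the corollary. Alternatively: Lemma~\ref{Rem} says \eqref{Mcond} fails precisely when one of its three conditions holds, so \eqref{Mcond} holds precisely when all three fail; unwinding that negation and splitting once more on parity reproduces (a)--(c).

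I do not expect a genuine obstacle --- the statement is essentially bookkeeping. The only points needing mild care are the degenerate cases $m=1$ and $n=1$, where one side has no prime divisors and the divisibility clauses are vacuous, and the fact that (b) and (c) genuinely overlap (when $m$ and $n$ are powers of a common odd prime), so ``one of (a)--(c)'' must be read inclusively.
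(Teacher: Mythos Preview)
Your proposal is correct and follows essentially the same route as the paper: the paper states Corollary~\ref{Rem1} without a separate proof, treating it as the immediate contrapositive of Lemma~\ref{Rem}, whose proof already carries out exactly the parity split you describe. Your recasting of \eqref{Mcond} as the pairwise condition $(\star)$ is a clean way to organize the bookkeeping, but the underlying case analysis is identical to what appears inside the proof of Lemma~\ref{Rem}.
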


\begin{lemma}\label{Odd}
Suppose that both $m$ and $n$ are odd, and there exist distinct primes $p\neq q$ 
with $p|m$ and $q|n$  such that either $p| (q-1)$ or $q|( p-1).$
Then there exists a non-nilpotent $(m,n)$-bicyclic group.
\end{lemma}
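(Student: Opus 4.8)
The plan is to exhibit, under the stated hypothesis, an explicit non-nilpotent group $G$ together with cyclic subgroups of orders $m$ and $n$ whose product is $G$. Since $\langle a\rangle\langle b\rangle=G$ is equivalent to $\langle b\rangle\langle a\rangle=G$ (take inverses), being $(m,n)$-bicyclic and $(n,m)$-bicyclic is the same, so I may assume $p\mid q-1$. Write $m=p^e m_1$ and $n=q^f n_1$ with $e,f\geq1$, $\gcd(p,m_1)=1$ and $\gcd(q,n_1)=1$.

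First I would build a non-nilpotent ``core''. As $p\neq q$ and $p\mid q-1$, the group $\Aut(\mathbb{Z}_{q^f})\cong\mathbb{Z}_{q^{f-1}(q-1)}$ has an element of order $p$, so there is a nontrivial homomorphism $\mathbb{Z}_{p^e}\twoheadrightarrow\mathbb{Z}_p\hookrightarrow\Aut(\mathbb{Z}_{q^f})$; let $H=\langle x\rangle\rtimes\langle y\rangle$ be the associated semidirect product, where $\langle x\rangle\cong\mathbb{Z}_{q^f}$ is the normal factor and $\langle y\rangle\cong\mathbb{Z}_{p^e}$. Because $y$ acts nontrivially on the generator $x$ we have $[x,y]\neq1$ while $\gcd(|x|,|y|)=1$, so $H$ is not nilpotent by Proposition~\ref{Nilp}. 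Moreover $\langle x\rangle\cap\langle y\rangle=1$ and $|\langle x\rangle|\,|\langle y\rangle|=|H|$, so $H$ is $(p^e,q^f)$-bicyclic via $H=\langle y\rangle\langle x\rangle$.

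Next I would inflate $H$ to the correct orders by setting $G=H\times\mathbb{Z}_{m_1}\times\mathbb{Z}_{n_1}$, choosing generators $c$ of $\mathbb{Z}_{m_1}$ and $d$ of $\mathbb{Z}_{n_1}$, and putting $a=(y,c,1)$, $b=(x,1,d)$. Since $\gcd(p^e,m_1)=1$ and $\gcd(q^f,n_1)=1$, one has $|a|=m$ and $|b|=n$. The key verification is $\langle a\rangle\cap\langle b\rangle=1$: an equality $a^i=b^j$ forces $y^i=x^j$ in $H$, hence $y^i=x^j=1$ (these lie in subgroups of coprime orders $p^e$ and $q^f$), hence $p^e\mid i$ and $q^f\mid j$; combining with the contributions of the $\mathbb{Z}_{m_1}$- and $\mathbb{Z}_{n_1}$-coordinates gives $m\mid i$ and $n\mid j$, so $a^i=1$. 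Consequently $|\langle a\rangle\langle b\rangle|=mn=|H|\cdot m_1\cdot n_1=|G|$, whence $G=\langle a\rangle\langle b\rangle$ is $(m,n)$-bicyclic; and $G$ is not nilpotent since it has the non-nilpotent direct factor $H$ (a finite direct product is nilpotent if and only if each factor is).

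I do not expect any serious obstacle: the construction is elementary and the only real design choice — the one that explains the appearance of $\rad(n)$ rather than $n$ in Theorem~\ref{main} — is to let $\mathbb{Z}_{p^e}$ act on $\mathbb{Z}_{q^f}$ \emph{through its $\mathbb{Z}_p$-quotient}, so that only an automorphism of order $p$ (equivalently $p\mid q-1$, equivalently $p\mid\phi(\rad(n))$) is required, while the $p$- and $q$-parts of $m$ and $n$ play no further role. The remainder is the routine order-counting indicated above.
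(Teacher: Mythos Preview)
Your proof is correct and follows essentially the same idea as the paper: both exploit $p\mid q-1$ to let a cyclic $p$-group act nontrivially (through its order-$p$ quotient) on a cyclic $q$-group, producing a non-nilpotent metacyclic $(m,n)$-bicyclic group. The only difference is packaging---the paper writes down a single presentation $G=\langle a,b\mid a^m=b^n=1,\ b^a=b^r\rangle$ with $r$ of order $p$ modulo $q^f$ and checks non-nilpotence via Proposition~\ref{Nilp}, whereas you build the same group as a direct product $H\times\mathbb{Z}_{m_1}\times\mathbb{Z}_{n_1}$ and verify bicyclicity by an intersection count.
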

\begin{proof}
By symmetry, it suffices to consider the case where $p|(q-1)$. Write 
$m=p^em_1$ and $n=q^fn_1$, where $e,f\geq1$, $p\nmid m_1$, and $q\nmid n_1$.

Since $p|(q-1)$, we have $p|\phi(q^f)=q^{f-1}(q-1)$, so $\mathbb{Z}_{q^f}^*$ contains an element
of order $p$. Hence, there exists an integer $r\in\mathbb{Z}_n^*$ such that 
\[
r^p\equiv1\pmod{q^f}\quad\text{and}\quad r\not\equiv1\pmod{q^f}.
\]
Let us define a group $G$ by the presentation
\[
G=\langle a,b\mid a^m=b^n=1, b^a=b^r\rangle.
\]
By H\"older's theorem (Proposition~\ref{Holder}), this defines a well-defined metacyclic group of order $mn$, and $G$ is an
$(m,n)$-bicyclic group. 

Now define $a_1=a^{m_1}$ and $b_1=b^{n_1}$.
Then $|a_1|=p^e$, $|b_1|=q^f$, and hence $\gcd(|a_1|,|b_1|)=1$. From the defining relation $b^a=b^r$, 
 we obtain
 \[
 b_1^{a_1}=(b^{n_1})^{a^{m_1}}=(b^{a^{m_1}})^{n_1}=b_1^{r^{m_1}}.
 \]
 Since $p\nmid m_1$ and $r\not\equiv1\pmod{q^f}$, it follows that $r^{m_1}\not\equiv1\pmod{q^f}$, and thus $[a_1,b_1]\neq1$. 
Therefore, $a_1$ and $b_1$ are two elements of coprime orders in $G$ that do not
commute. By Proposition~\ref{Nilp}, $G$ is not nilpotent.
\end{proof}

\begin{lemma}\label{Even}
Suppose that  both  $m$ and $n$ are even, but not both equal to powers of the common prime $2$.
Then there is a non-nilpotent $(m,n)$-bicyclic group.
\end{lemma}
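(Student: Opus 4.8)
The plan is to build, via Hölder's theorem (Proposition~\ref{Holder}), an explicit metacyclic $(m,n)$-bicyclic group and then show it is not nilpotent by exhibiting two elements of coprime order that do not commute, invoking Proposition~\ref{Nilp}. By the symmetry of the hypothesis between $m$ and $n$, I may assume that $m$ is not a power of $2$; fix an odd prime $p$ dividing $m$ and let $p^c$ be the exact power of $p$ dividing $m$.

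Since $p$ is odd, $\mathbb{Z}_{p^c}^*$ is cyclic of even order and so contains $-1$ as an element of order $2$. Using the Chinese remainder theorem I would pick $r\in\mathbb{Z}_m^*$ with $r\equiv-1\pmod{p^c}$ and $r\equiv1\pmod{m/p^c}$. As $n$ is even, $r^n\equiv1\pmod{p^c}$, while trivially $r^n\equiv1\pmod{m/p^c}$, so $r^n\equiv1\pmod m$; hence the numerical conditions of Proposition~\ref{Holder} hold with $s=0$, and $G=\langle a,b\mid a^m=b^n=1,\ a^b=a^r\rangle$ is a metacyclic group of order $mn$ with $|a|=m$, $|b|=n$ and $G=\langle a\rangle\langle b\rangle$. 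Thus $G$ is $(m,n)$-bicyclic.

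To finish I would write $n=2^fn_1$ with $n_1$ odd and set $a_1=a^{m/p^c}$, of order $p^c$, and $b_1=b^{n_1}$, of order $2^f$, so that $\gcd(|a_1|,|b_1|)=1$. From $a^b=a^r$ one computes $a_1^{b_1}=a_1^{r^{n_1}}$, and since $r\equiv-1\pmod{p^c}$ with $n_1$ odd we get $r^{n_1}\equiv-1\pmod{p^c}$; as $p^c\geq3$ this means $a_1^{b_1}=a_1^{-1}\neq a_1$, i.e.\ $[a_1,b_1]\neq1$. By Proposition~\ref{Nilp}, $G$ is not nilpotent.

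The only delicate point — and the reason I take $b_1=b^{n_1}$ rather than an element of order exactly $2$ — is that $b^{n/2}$ acts on $\langle a\rangle$ as the $(n/2)$-th power of $r$, which is trivial on the $p$-part whenever $4\mid n$; passing to the full $2$-part generator $b^{n_1}$ keeps $|b_1|$ coprime to $p$ while ensuring the acting exponent is an odd power of $r$, so that the inversion on $\langle a_1\rangle$ persists. Everything else is a routine verification once $r$ has been chosen so that Hölder's congruences are satisfied, and the construction closely parallels that in the proof of Lemma~\ref{Odd}.
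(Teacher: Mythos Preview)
Your proof is correct and follows essentially the same strategy as the paper's: construct a metacyclic $(m,n)$-bicyclic group via H\"older's theorem using an involution $r$ in $\mathbb{Z}_m^*$ that acts nontrivially on an odd part of $m$, then exhibit a $p$-element and a $2$-element of coprime orders that fail to commute, invoking Proposition~\ref{Nilp}. The only cosmetic differences are that the paper works with the whole odd part $m_1$ of $m$ and picks any $r$ with $r^2\equiv1\not\equiv r\pmod{m_1}$, whereas you isolate a single odd prime $p\mid m$ and take $r\equiv-1\pmod{p^c}$, $r\equiv1\pmod{m/p^c}$; your explicit CRT choice actually makes the verification of the H\"older congruence $r^n\equiv1\pmod m$ a bit cleaner than in the paper.
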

\begin{proof}
Write $m=2^em_1$ and $n=2^fn_1$, where $e,f\geq1$,  
and both $m_1$ and $n_1$ are odd. Since $m$ and $n$ are not both powers of $2$, we
may assume without loss of generality that $m_1>1$ (otherwise $n_1>1$, and the argument is similar).

Since $m_1>1$ is odd, it follows that $2|\phi(m_1)$. Hence, the multiplicative group $\mathbb{Z}_m^*$ 
of units modulo $m$ has even order $\phi(m)=\phi(2^e)\phi(m_1)$, and so it contains an element $r\in\mathbb{Z}_m^*$ such that
 \[
 r^2\equiv1\pmod{m_1}\quad\text{but}\quad r\not\equiv1\pmod{m_1}.
 \]
Let us define a group $G$ by the presentation
\[
G=\langle a,b\mid a^m=b^n=1, a^b=a^r\rangle.
\]
By H\"older's theorem, this gives a well-defined  metacyclic group of order $mn$.

Now set $a_1=a^{2^e}$ and $b_1=b^{n_1}$. Then 
 $| a_1|=m_1$, $|b_1|=2^f$, and $\gcd(|a_1|,|b_1|)=1$. From the defining relation $a^b=a^r$, we compute:
  \[
  a_1^{b_1}=(a^{2^e})^{b^{n_1}}=(a^{b^{n_1}})^{2^e}=(a^{r^{n_1}})^{2^e}=a_1^{r^{n_1}}.
  \]
 Since $n_1$ is odd and $r\not\equiv1\pmod{m_1}$, we have
$r^{n_1}\not\equiv1\pmod{m_1}$, so $[a_1,b_1]\neq1$.

Thus, $a_1$ and $b_1$ are elements  of coprime orders in $G$ which 
do not commute. By Proposition~\ref{Nilp}, the group $G$ is non-nilpotent.
\end{proof}

\begin{lemma}\label{OddEven}
Suppose that $m$ is even and $n>1$ is odd, or  $n$ is even and $m>1$ is odd. Then there
exists a non-nilpotent $(m,n)$-bicyclic group.
\end{lemma}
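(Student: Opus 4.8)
The plan is to reuse the mechanism of Lemmas~\ref{Odd} and~\ref{Even}: produce an explicit metacyclic $(m,n)$-bicyclic group in which a suitable power of $a$ and a suitable power of $b$ have coprime orders but fail to commute, so that Proposition~\ref{Nilp} forbids nilpotency. By the symmetry $m\leftrightarrow n$, $a\leftrightarrow b$, it suffices to treat the case $m$ even and $n>1$ odd. Fix an odd prime $q\mid n$ and write $m=2^em_1$, $n=q^fn_1$ with $e,f\geq1$, $m_1$ odd, and $q\nmid n_1$.

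First I would choose the twisting unit. Take $r\in\mathbb{Z}_n^*$ with $r\equiv-1\pmod n$ (equivalently, any unit with $r\equiv-1\pmod{q^f}$). Since $q$ is odd we have $q^f\geq3$, so $r\not\equiv1\pmod{q^f}$, while $r^2\equiv1\pmod n$; as $m$ is even this yields $r^m\equiv1\pmod n$. Viewing $G$ as an extension of the cyclic normal subgroup $\langle b\rangle$ of order $n$ by the cyclic group $\langle a\rangle$ of order $m$, H\"older's theorem (Proposition~\ref{Holder}, applied with the parameter $s=0$, so that $(r-1)s\equiv0$ trivially) then shows that
\[
G=\langle a,b\mid a^m=b^n=1,\ b^a=b^r\rangle
\]
is a well-defined group of order $mn$, and it is $(m,n)$-bicyclic.

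Next I would exhibit the obstruction to nilpotency. Set $a_1=a^{m_1}$ and $b_1=b^{n_1}$, so that $|a_1|=2^e$, $|b_1|=q^f$, and $\gcd(|a_1|,|b_1|)=1$. From $b^a=b^r$ one computes
\[
b_1^{a_1}=\bigl(b^{a^{m_1}}\bigr)^{n_1}=b_1^{r^{m_1}},
\]
and since $m_1$ is odd while $r^2\equiv1\pmod{q^f}$, we get $r^{m_1}\equiv r\equiv-1\not\equiv1\pmod{q^f}$, hence $b_1^{r^{m_1}}\neq b_1$ and $[a_1,b_1]\neq1$. Thus $G$ contains two non-commuting elements of coprime order, so $G$ is not nilpotent by Proposition~\ref{Nilp}; the case ``$n$ even, $m>1$ odd'' follows by interchanging the roles of $a$ and $b$.

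I do not expect a genuine obstacle here, since this is the same argument as in the two preceding lemmas, now carried out in the mixed-parity regime. The only points needing a little care are the choice of $r$ so that it is simultaneously a unit of order dividing $2$ modulo $n$ and nontrivial modulo $q^f$ (this is exactly where the oddness of $q$, hence $q^f\geq3$, enters), the verification that $a_1$ and $b_1$ have coprime orders, and the routine check that $s=0$ and $r^m\equiv1\pmod n$ satisfy the numerical hypotheses of H\"older's theorem.
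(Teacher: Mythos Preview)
Your proof is correct and follows essentially the same route as the paper: build a metacyclic $(m,n)$-bicyclic group via H\"older's theorem using a unit $r$ of multiplicative order~$2$ modulo $n$, then exhibit non-commuting elements of coprime order to contradict Proposition~\ref{Nilp}. The only cosmetic differences are that the paper takes an abstract order-$2$ unit $r\in\mathbb{Z}_n^*$ (rather than your explicit $r=-1$) and uses $b_1=b$ directly (rather than passing to the $q$-part $b_1=b^{n_1}$), neither of which changes the argument.
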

\begin{proof}
Assume first that $m$ is even, and $n>1$ is odd. Write $m=2^em_1$, where $e\geq1$ and $m_1$ is odd. 
Since $n>1$ is odd, the multiplicative group $\mathbb{Z}_n^*$
has even order $\phi(n)$, and so contains an element $r\in\mathbb{Z}_n^*$ with
\[
r^2\equiv1\pmod{n},\quad\text{but}\quad r\not\equiv1\pmod{n}.
\]
Define a group $G$ by the presentation
\[
G=\langle a,b\mid a^m=b^n=1,b^a=b^r\rangle.
\]
Then $G$ is metacyclic $(m,n)$-bicyclic group by Proposition~\ref{Holder}.
 
 Now let $a_1=a^{m_1}$ and $b_1=b$. Then $|a_1|=2^e$, $|b_1|=n$, and $\gcd(|a_1|,|b_1|)=1$. The 
 relation $b^a=b^r$ implies: $b_1^{a_1}=b^{a^{m_1}}=b^{r^{m_1}}$. Since $m_1$ is odd and $r\not\equiv1\pmod{n}$,
 we have $r^{m_1}\not\equiv1\pmod{n}$, so $[a_1,b_1]\neq1$. Hence, $G$ is non-nilpotent by Proposition~\ref{Nilp}.

If instead $n$ is even and $m>1$ is odd, the proof is similar by symmetry.
\end{proof}

\begin{lemma}\label{Action}
If $G$ is an $(m,n)$-bicyclic group containing a normal 
  Sylow $p$-subgroup $P$ such that $\gcd(p-1,|G/P|)=1$, then $G=P\times Q$ where $Q$ is a complement
  of $P$ in $G$.
\end{lemma}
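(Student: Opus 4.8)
The plan is to reduce the statement to showing that the complement $Q$ centralises $P$, and then to control the action of $Q$ on the Frattini quotient $V:=P/\Phi(P)$ by exploiting the bicyclic hypothesis.

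Since $P$ is a normal Sylow $p$-subgroup, it is a normal Hall subgroup of $G$, so by the Schur--Zassenhaus theorem (Proposition~\ref{Zass}) it admits a complement $Q$, with $G=PQ$ and $P\cap Q=1$; note $Q$ is a $p'$-group. It therefore suffices to prove $[P,Q]=1$, for then $Q\trianglelefteq G$ and $G=P\times Q$. Conjugation defines a homomorphism $\theta\colon G\to\Aut(P)$, and since $[P,P]\le\Phi(P)$ (Proposition~\ref{Basis}(a)) the inner automorphisms of $P$ act trivially on $V$; hence the composite $\rho\colon G\to\Aut(P)\to\Aut(V)$ has $P$ in its kernel and factors through $G/P\cong Q$, so $|\rho(G)|$ divides $|G/P|=|Q|$. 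By Proposition~\ref{Basis}(d) the kernel of $\Aut(P)\to\Aut(V)$ is a $p$-group; therefore, once we know $\rho(G)=1$, the image $\theta(Q)$ will be a $p'$-group (its order divides $|Q|$) contained in a $p$-group, forcing $\theta(Q)=1$, i.e.\ $[P,Q]=1$. Thus everything comes down to proving $\rho(G)=1$.

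To prove this I will feed in the factorisation $G=\langle a\rangle\langle b\rangle$. Let $P_1,P_2$ be the (unique, hence characteristic) Sylow $p$-subgroups of $\langle a\rangle$ and $\langle b\rangle$; both lie in $P$, and by Proposition~\ref{Prod} together with the uniqueness of $P$ we get $P=P_1P_2$. Choosing generators $a_p$ of $P_1$ and $b_p$ of $P_2$, we see that $V$ is spanned by $\bar a_p,\bar b_p$, so $d:=\dim_{\mathbb{F}_p}V\le 2$. Because $\rho$ is a homomorphism, $\rho(G)=\langle\rho(a)\rangle\langle\rho(b)\rangle$, and since $|\rho(G)|$ divides $|Q|$ the elements $\rho(a),\rho(b)$ are $p'$-elements of $\GL(V)$; moreover $a$ commutes with $a_p$, so $\rho(a)$ fixes $\bar a_p$, and likewise $\rho(b)$ fixes $\bar b_p$. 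If $d\le 1$ then $\rho(G)\le\GL(d,p)$ has order dividing $\gcd(p-1,|Q|)=\gcd\big(p-1,|G/P|\big)=1$, so $\rho(G)=1$. If $d=2$, then $\bar a_p,\bar b_p$ are nonzero, so $\rho(a)$ is a semisimple element of $\GL(2,p)$ having $1\in\mathbb{F}_p$ as an eigenvalue; its characteristic polynomial then splits over $\mathbb{F}_p$, so $\rho(a)$ is diagonalisable with eigenvalues $1$ and some $\lambda\in\mathbb{F}_p^{*}$, whence its order equals the multiplicative order of $\lambda$ and divides $p-1$. As this order also divides $|Q|=|G/P|$ and $\gcd(p-1,|G/P|)=1$, we conclude $\rho(a)=1$; symmetrically $\rho(b)=1$, so again $\rho(G)=1$, and the proof is complete.

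The crux — and the only real obstacle — is the case $d=2$: the $p'$-part of $|\GL(2,p)|$ is $(p-1)^2(p+1)$, and the hypothesis $\gcd(p-1,|G/P|)=1$ does not by itself exclude a nontrivial image of order dividing $p+1$ (an irreducible, Singer-type action on $V$). What rescues the argument is that bicyclicity forces $\rho(G)$ to be a product of two cyclic subgroups, each generated by an element of $\GL(2,p)$ that fixes a nonzero vector; a semisimple such element is necessarily diagonalisable over $\mathbb{F}_p$ and so has order dividing $p-1$ rather than $p+1$, which the hypothesis then kills. A secondary point to get right is the bookkeeping passing from $\rho(G)=1$ back to $[P,Q]=1$, which is exactly the $p$-group-kernel input of Proposition~\ref{Basis}(d).
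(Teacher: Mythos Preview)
Your proof is correct and follows essentially the same route as the paper's: obtain a complement $Q$ via Schur--Zassenhaus, reduce to showing $Q$ centralises $P$ by analysing the induced action on $V=P/\Phi(P)$, use bicyclicity to get $\dim V\le 2$ with $\rho(a)$ fixing $\bar a_p$ and $\rho(b)$ fixing $\bar b_p$, and then invoke Proposition~\ref{Basis}(d) to lift $\rho(G)=1$ back to $[P,Q]=1$. The only cosmetic difference is in the $d=2$ step: the paper places $a^{\gamma}$ and $b^{\gamma}$ in explicit matrix subgroups $U,V\le\GL(2,p)$ of order $p(p-1)$ and bounds $|G^{\gamma}|\mid p^2(p-1)^2$, whereas you argue directly that a $p'$-element of $\GL(2,p)$ with a nonzero fixed vector is diagonalisable over $\mathbb{F}_p$ with order dividing $p-1$; both arguments encode the same observation and finish identically via $\gcd(p-1,|G/P|)=1$.
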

\begin{proof}
By hypothesis, we may assume $G=\langle a\rangle\langle b\rangle$, where $|a|=m$ and 
$|b|=n$, and $1\neq P\lhd G$. Then the conjugation action of $G$ on $P$ defines a homomorphism: 
\[
\alpha:G\to\Aut(P), g\mapsto \alpha_g, \quad\text{where} \quad x^{\alpha_g} =x^g~\text{for all}~ x\in P.
\] 
Since $\Phi(P)\ch P$, we obtain an induced action on $P/\Phi(P)$ defined by
\[
(x\Phi(P))^{\bar\sigma}=x^{\sigma}\Phi(P),\quad \forall x\in P
\]
This gives a homomorphism:
\[
\beta:\Aut(P)\to\Aut(P/\Phi(P)), \sigma\mapsto\bar\sigma.
\]
Let $\gamma:=\alpha\circ\beta$, giving is a homomorphism $\gamma: G\to\Aut(P/\Phi(P))$.

Since $G$ is bicyclic, by Proposition~\ref{Prod}, so is $P$. Thus, $P$ is a $p$-group with $\rank(P)\leq 2$.
By Burnside's Basis theorem, $P/\Phi(P)\cong\mathbb{Z}_p^r$ with $r\leq 2$

\begin{case}[1] $\rank(P)=1$. 

Then $P$ is cyclic, and $\Aut(P)\cong\mathbb{Z}_{p^{s}}^*$ has order $p^{s-1}(p-1)$ for some $s\geq1$. 
Since $Q^{\alpha}\leq \Aut(P)$, so $|Q^{\alpha}|$ divides $p^s(p-1)$. 
Since $\gcd(|Q|,p(p-1))=1$, we have $Q^{\alpha}=1$, i.e., $Q$ centralizes $P$. Hence, $G=P\times Q$.
\end{case}

\begin{case}[2] $\rank(P)=2$.

By Burnside's basis theorem, $P/\Phi(P)\cong\mathbb{Z}_p^2$, which can be viewed as a  
$2$-dimensional $\GF(p)$-vector space $V$ with a bais $(\bar a,\bar b)$ where $\bar a:=a\Phi(P)$ and $\bar b:=a\Phi(P)$.
Then the images  $a^{\gamma}$ and $b^{\gamma}$ are represented by invertible linear transformations over $V$,
lying in the subgroups $U$ and $V$ of $\GL(2,p)$, respectively, defined as follows:
\[
U:=\Big\{
\begin{pmatrix}
1&0\\
i&j
\end{pmatrix}
\mid i\in\GF(p), j\in\GF(p)^*
\Big\}
\]
and
\[
V:=\Big\{
\begin{pmatrix}
k&l\\
0&1
\end{pmatrix}
\mid l\in\GF(p), k\in\GF(p)^*
\Big\}.
\]
Both $U$ and $V$ have order $p(p-1)$, so the group $G^{\gamma}$ has order dividing $p^2(p-1)^2. $ Since $\ker\beta$
is a $p$ group by Proposition~\ref{Basis}(d), $G^{\alpha}=(G^{\gamma})^{\beta^{-1}}$ has order dividing $p^s(p-1)^2$ for
some $s\geq2$. Thus, $|Q^\alpha|$  divides $p^s(p-1)^2$, but $\gcd(|Q|,p(p-1))=1$, so $Q^{\alpha}=1$, and $Q\leq C_G(P)$.
Therefore, $G=P\times Q$
\end{case}
This completes the proof.
\end{proof}

We are ready to prove the main result of the paper.
\begin{proof}[Proof of Theorem~\ref{main}]
If the integers $m$ and $n$ do not satisfy the conditions in~\eqref{Mcond}, then by Lemma~\ref{Rem}--\ref{OddEven} 
 there exists a non-nilpotent $(m,n)$-bicyclic group.

Suppose that $m$ and $n$ satisfy the conditions in~\eqref{Mcond}, and let $G$ be an $(m,n)$-bicyclic group. 
Then, by Corollary~\ref{Rem1}, one of the following holds: 
 \begin{enumerate}[\rm(a)]
 \item $m=1$ or $n=1$,
 \item $m=p^e$ and $n=p^f$ for some prime $p$ and integers $e,f\geq1$,
  \item both $m$ and $n$ are odd numbers, and for each pair of primes $p\neq q$ with $p|m$ and $q|n$, 
  we have $p\nmid (q-1)$ and $q\nmid( p-1).$
 \end{enumerate}

If (a) holds, then $G$ is cyclic.  If (b) holds, then $G$ is a  $p$-group. In both cases, $G$ is certainly is nilpotent. 

Suppose now that (c) holds.  By Proposition~\ref{Bicyclic}, the group 
$G$ is supersolvable, and hence, by Proposition~\ref{Super}, it has a Sylow tower. 
In particular, let $p$ be the largest prime divisor of $|G|$, and let $P\in\Syl_p(G)$. Then $P\lhd G$.  
By the Schur-Zassenhauss theorem, $P$ has a complement $Q$ in $G$, and $Q$ is an $(m_1,n_1)$-bicyclic group
for some positive integers $m_1|m$ and $n_1|n$. It is clear that $m_1$ and $n_1$ inherit the same 
structural conditions
from $m$ and $n$, so the inductive hypothesis applies and  $Q$ nilpotent. 
Applying Lemma~\ref{Action} we obtain $G=P\times Q$. 
Therefore, $G$ is also nilpotent, as required.
\end{proof}

\begin{corollary}\label{Abel}
Every $(m,n)$-bicyclic group is abelian if and only if 
\begin{align}\label{Mcond2}
\gcd(m,\phi(n))=\gcd(n,\phi(m))=1.
\end{align}
\end{corollary}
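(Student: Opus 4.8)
The plan is to deduce Corollary~\ref{Abel} from Theorem~\ref{main} by a Sylow-by-Sylow analysis, the only extra inputs being that every group of order dividing $r^2$ is abelian and, for the converse direction, H\"older's theorem (Proposition~\ref{Holder}). First I would record the elementary identity $\phi(k)=(k/\rad(k))\,\phi(\rad(k))$, which gives $\phi(\rad(k))\mid\phi(k)$ for every positive integer $k$; hence condition \eqref{Mcond2} implies condition \eqref{Mcond}. Consequently, whenever \eqref{Mcond2} holds, Theorem~\ref{main} shows that every $(m,n)$-bicyclic group $G$ is nilpotent, so $G=\prod_r P_r$ is the direct product of its Sylow subgroups and $G$ is abelian if and only if each $P_r$ is. By Proposition~\ref{Prod}, each $P_r$ is $(r^{a_r},r^{b_r})$-bicyclic with $r^{a_r}\,\|\,m$ and $r^{b_r}\,\|\,n$. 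If $a_r=0$ or $b_r=0$, then $P_r$ is cyclic. If $a_r,b_r\geq1$ and, say, $a_r\geq2$, then $r\mid r^{a_r-1}(r-1)\mid\phi(m)$ while $r\mid n$, so $r\mid\gcd(n,\phi(m))$, contradicting \eqref{Mcond2}; the case $b_r\geq2$ is symmetric. Hence $a_r=b_r=1$, $|P_r|$ divides $r^2$, and $P_r$ is abelian, which settles the sufficiency direction.

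For necessity, I would start from the assumption that every $(m,n)$-bicyclic group is abelian; each such group is in particular nilpotent, so Theorem~\ref{main} gives \eqref{Mcond}. Assuming for contradiction that \eqref{Mcond2} fails, a prime $r$ divides (by symmetry) $\gcd(n,\phi(m))$. Since $\phi(m)=\prod_{p\mid m}p^{e_p-1}(p-1)$, the prime $r$ divides $p^{e_p-1}(p-1)$ for some $p\mid m$. If $r\mid p-1$, then $r\neq p$, so $r\mid\phi(\rad(m))$ and hence $r\mid\gcd(n,\phi(\rad(m)))$, contradicting \eqref{Mcond}; therefore $r=p$ and $e_p\geq2$, that is, $r^2\mid m$ and $r\mid n$. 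I would then exhibit a non-abelian witness in the spirit of Lemmas~\ref{Odd}--\ref{OddEven}: writing $r^{v}\,\|\,m$ with $v\geq2$, choose $t\in\mathbb{Z}_m^*$ with $t\equiv 1+r^{v-1}\pmod{r^{v}}$ and $t\equiv1$ modulo the prime-to-$r$ part of $m$. Then $t$ has multiplicative order $r$ modulo $m$, and since $r\mid n$ one gets $t^n\equiv1\pmod m$, so by Proposition~\ref{Holder} the presentation $G=\langle a,b\mid a^m=b^n=1,\ a^b=a^t\rangle$ defines a metacyclic group of order $mn$ which is $(m,n)$-bicyclic and non-abelian, the desired contradiction. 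This also recovers the abelianness criterion of Fan and Li recalled in the introduction.

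I do not expect a genuine conceptual obstacle here: given Theorem~\ref{main}, the whole statement collapses to the observation that $(r^2,r)$- and $(r,r^2)$-bicyclic $r$-groups can be non-abelian, whereas $(r,r)$-bicyclic groups, being of order dividing $r^2$, are always abelian. The one point deserving a little care is the verification, in the converse, that $\mathbb{Z}_m^*$ contains an element of the prescribed shape and order: for $r$ odd this is routine, and when $r=2$ condition \eqref{Mcond} already forces $m$ and $n$ to be powers of $2$ by Corollary~\ref{Rem1}, while $1+2^{v-1}$ still has order $2$ modulo $2^{v}$ for every $v\geq2$; so the H\"older construction goes through for all primes $r$ uniformly.
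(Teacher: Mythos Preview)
Your proof is correct and follows essentially the same route as the paper: both directions rest on Theorem~\ref{main} together with the identity $\phi(k)=(k/\rad(k))\,\phi(\rad(k))$, the sufficiency argument reduces to showing each Sylow subgroup has order at most $p^2$, and the necessity argument produces a non-abelian $(m,n)$-bicyclic witness once one has isolated a prime $r$ with $r^2\mid m$ and $r\mid n$ (or symmetrically). The only cosmetic difference is in the construction of that witness: the paper builds a non-abelian $(p^f,p^e)$-bicyclic $p$-group and then takes its direct product with an abelian $(m/p^f,n/p^e)$-bicyclic complement, whereas you write down a single metacyclic group $\langle a,b\mid a^m=b^n=1,\ a^b=a^t\rangle$ via H\"older's theorem with a carefully chosen $t$ of multiplicative order $r$. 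Both constructions are straightforward and serve the same purpose; your version has the mild advantage of being slightly more self-contained and of handling the case analysis (including $r=2$) explicitly.
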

\begin{proof}
Note that if $n=\prod_{p|n}p^e$, then 
\[
\phi(n)=\prod_{p|n}p^{e-1}(p-1)=\frac{n}{\rad(n)}\phi(\rad(n)).
\]
Suppose first that every $(m,n)$-bicyclic group is abelian. Then, in particular,  every such group is nilpotent.
By Theorem~\ref{main}, it follows that
\[
\gcd(m,\phi(\rad(n)))=\gcd(n,\phi(\rad(m)))=1.
\]
Now assume that $\gcd(m,\phi(n))\neq1$. Then there exists a common prime divisor $p$  such that 
$p^{e}||n$ and $p^f||m$ for some $e\geq2$ and $f\geq1$.
It is possible to construct a non-abelian $(p^f,p^e)$-bicyclic metacyclic group $P$. 
Taking the direct product of $P$ with an abelian $(m/p^f,n/p^e)$-bicyclic group  gives 
 a non-abelian $(m,n)$-bicyclic group, contradicting our assumption.
A similar contradiction arises if $\gcd(n,\phi(m))\neq1$, so condition~\eqref{Mcond2} is necessary.

Conversely, suppose that $\gcd(m,\phi(n))=\gcd(n,\phi(m))=1.$ 
Then it follows that $\gcd(m,\phi(\rad(n)))=\gcd(n,\phi(\rad(m)))=1,$ 
and hence every $(m,n)$-bicyclic group $G$ is nilpotent by the main theorem. 

To show that such a group $G$ is abelian,
it suffices to show that each Sylow $p$-subgroup $P$ of $G$ is abelian.
If $p^e||m$ and $p^f||n$, the assumption implies that $e\leq1$ and $f\leq 1$.
so $|P|\leq p^2$, which implies that $P$ is abelian. Therefore, $G$ is abelian.
\end{proof}

\begin{corollary}
Every $(m,n)$-bicyclic group is cyclic if and only if 
\begin{align}\label{Mcond3}
\gcd(m,n)= \gcd(m,\phi(n))=\gcd(n,\phi(m))=1.
\end{align}
\end{corollary}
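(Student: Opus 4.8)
The plan is to deduce this from Corollary~\ref{Abel} together with the classical characterization of cyclic numbers. First I would observe that every cyclic group is abelian, and a finite abelian group is cyclic if and only if all of its Sylow subgroups are cyclic. So the statement is really about upgrading the abelian characterization in Corollary~\ref{Abel} by an extra primary-decomposition condition.

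The forward direction goes as follows. Suppose every $(m,n)$-bicyclic group is cyclic. Then in particular every such group is abelian, so Corollary~\ref{Abel} gives $\gcd(m,\phi(n))=\gcd(n,\phi(m))=1$. It remains to show $\gcd(m,n)=1$. If some prime $p$ divides both $m$ and $n$, write $p^e\|m$ and $p^f\|n$ with $e,f\geq1$. Then the cyclic group $\mathbb{Z}_m\times\mathbb{Z}_n$ is an $(m,n)$-bicyclic group whose Sylow $p$-subgroup is $\mathbb{Z}_{p^e}\times\mathbb{Z}_{p^f}$, which is noncyclic; hence this group is not cyclic, a contradiction. (Alternatively one can just note $|G|=mn$ has a square factor $p^2$, and a cyclic number must be square-free.) This forces $\gcd(m,n)=1$, establishing~\eqref{Mcond3}.

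For the converse, assume~\eqref{Mcond3}. Since $\gcd(m,\phi(n))=\gcd(n,\phi(m))=1$, Corollary~\ref{Abel} tells us every $(m,n)$-bicyclic group $G$ is abelian, and moreover its order is $mn$. The condition $\gcd(m,n)=1$ means that for each prime $p$, at most one of $m,n$ is divisible by $p$; combined with the fact (shown inside the proof of Corollary~\ref{Abel}) that $\gcd(m,\phi(n))=\gcd(n,\phi(m))=1$ forces $p\|m$ or $p\|n$ whenever $p$ divides the order, we get $p\|mn=|G|$ for every prime divisor $p$. Thus $|G|$ is square-free, so each Sylow subgroup of the abelian group $G$ is cyclic of prime order, and therefore $G$ itself is cyclic.

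There is no real obstacle here; the only point requiring a little care is making the square-free conclusion rigorous from~\eqref{Mcond3}, but this is immediate: $\gcd(m,n)=1$ gives $|G|=mn$ with $m,n$ coprime, and the coprimality of $m$ with $\phi(n)$ (and symmetrically) already excludes $p^2\mid n$ for $p\mid n$, since $p^2\mid n$ would give $p\mid\phi(n)$ while $p\mid n\mid m n$, contradicting one of the gcd conditions once one notes $p\nmid m$. Hence the argument reduces entirely to citing Corollary~\ref{Abel} and invoking that an abelian group of square-free order is cyclic.
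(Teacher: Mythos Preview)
Your forward direction is fine and matches the paper's argument. The converse, however, contains a genuine error: the claim that $mn$ is square-free under~\eqref{Mcond3} is false. Take for instance $m=9$ and $n=5$: then $\gcd(9,5)=1$, $\gcd(9,\phi(5))=\gcd(9,4)=1$, and $\gcd(5,\phi(9))=\gcd(5,6)=1$, so all three conditions hold, yet $mn=45$ is not square-free. Your attempted justification breaks down at the last step: from $p^2\mid n$ you correctly get $p\mid\phi(n)$, but since $p\nmid m$ this is perfectly compatible with $\gcd(m,\phi(n))=1$, and there is no contradiction with any of the gcd conditions. (The sentence you cite from the proof of Corollary~\ref{Abel} is itself loosely worded; what it really yields is that $p$ cannot divide both $m$ and $n$ to high powers simultaneously, not that each exponent is at most~$1$ individually.)

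The fix is immediate and is exactly what the paper does: once Corollary~\ref{Abel} gives that $G$ is abelian, the hypothesis $\gcd(m,n)=1$ forces $\langle a\rangle\cap\langle b\rangle=1$, so the bicyclic factorization $G=\langle a\rangle\langle b\rangle$ is an internal direct product $G\cong C_m\times C_n\cong C_{mn}$, which is cyclic. No square-free argument is needed.
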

\begin{proof}
Suppose every $(m,n)$-bicyclic group is cyclic. Then each such group is in particular abelian. 
By Corollary~\ref{Abel}, we have $\gcd(m,\phi(n))=\gcd(n,\phi(m))=1$.
If $\gcd(m,n)\neq1$, then the abelian group $C_m\times C_n$ is a noncyclic $(m,n)$-bicyclic group,
contradicting our assumption. 

Conversely, suppose that the three conditions in  \eqref{Mcond3} hold.
Then by Corollary~\ref{Abel},
every $(m,n)$-bicyclic group $G$ is abelian. Since $\gcd(m,n)=1$, the group $G\cong C_m\times C_n$ is a direct product, 
so it is cyclic, as required.
\end{proof}
\section{Further problems}
As discussed earlier, there exist classical results concerning
 group factorizations. For instance, It\^{o} showed that the product of two abelian groups is always metabelian,
and Kegel and Wielandt proved that the product of two nilpotent groups is always solvable. 
Building on these foundational results, and in light of the structural classification obtained in this paper, it
is natural to consider the following broader questions:
\begin{problem}
Determine necessary and sufficient conditions on positive integers $m$ and $n$ such that every product of 
two abelian groups of orders $m$ and $n$ is 
\begin{enumerate}[(a)]
\item abelian, 
\item nilpotent, 
\item supersolvable, 
\item or satisfies other structural properties.
\end{enumerate}
\end{problem} 

\begin{problem}
Determine necessary and sufficient conditions on positive integers $m$ and $n$ such that every product of 
two nilpotent groups of orders $m$ and $n$ is 
\begin{enumerate}[(a)]
\item nilpotent, 
\item metabelian,
\item supersolvable, 
\item or has some other specific structural properties.
\end{enumerate} 
\end{problem} 

To provide further context, recall that a positive integer $n$ is cyclic if every group of order $n$ is cyclic; 
this occurs if and only if $\gcd(n,\phi(n))=1$. Similarly, a pair of positive integers $(m,n)$ 
is singular if $\gcd(m,\phi(n))=1$ and $\gcd(n,\phi(m))=1$,
in which case every $(m,n)$-bicyclic group is abelian. 

 Erd\H{o}s proved that the number of cyclic integers $n\leq x$ is asymptotic to 
\[
z(x)=e^{\gamma}\frac{x}{\log\log\log x},\quad \text{as}~x\to\infty,
\]
where $\gamma$ is Euler's constant~\cite{Erdos1948}. This was extended by Nedela and Pomerance,
who showed that the number of singular pairs  with $m,n\leq x$ is asymptotic to $z(x)^2$ as $x\to\infty$~\cite{NP2018}. 

In light of these number-theoretic results, the following question naturally arises:
\begin{problem}
Find an asymptotic formula for the number of integer pairs $(m,n)$ with $m,n\leq x$ such that 
$\gcd(m,\phi(\rad(n))=1$ and $\gcd(n,\phi(\rad(m))=1$.
\end{problem}

\end{document}